\documentclass[12pt,reqno]{article}
\usepackage{epsfig,amssymb,latexsym,amsmath,pifont,multicol,mathtools,amsmath,verbatim,amsthm,float,lipsum}
\usepackage{caption} 
\usepackage[utf8]{inputenc}
\usepackage[T1]{fontenc}

\usepackage[varg]{txfonts}
\let\mathbb=\varmathbb

\usepackage[sans]{dsfont}

\usepackage[left=2cm, right=2cm, top=2cm, bottom=2cm]{geometry}

\usepackage[%
cal=euler,
bb=fourier,
scr=zapfc,
]
{mathalfa}

\usepackage[font=small,labelfont=bf]{caption}
\usepackage{subfigure}
\usepackage{graphicx}
\graphicspath{{Figures/}} 
\usepackage{acronym}
\usepackage{latexsym}
\usepackage{paralist}
\usepackage{xspace}
\usepackage{mdframed}
\usepackage{palatino,pxfonts}
\usepackage{authblk}
\usepackage{enumitem}
\usepackage[numbers,sort&compress]{natbib}

\usepackage[dvipsnames,svgnames]{xcolor}
\colorlet{MyBlue}{DodgerBlue!75!Black}
\colorlet{MyGreen}{DarkGreen!95!Black}

\usepackage{hyperref}
\hypersetup{
colorlinks=true,
linktocpage=true,
pdfstartview=FitH,
breaklinks=true,
pdfpagemode=UseNone,
pageanchor=true,
pdfpagemode=UseOutlines,
plainpages=false,
bookmarksnumbered,
bookmarksopen=false,
bookmarksopenlevel=1,
hypertexnames=true,
pdfhighlight=/O,
urlcolor=MyBlue!60!black,linkcolor=MyBlue!70!black,citecolor=DarkGreen!70!black, 
pdftitle={},
pdfauthor={},
pdfsubject={},
pdfkeywords={},
pdfcreator={pdfLaTeX},
pdfproducer={LaTeX with hyperref}
}

\numberwithin{equation}{section}  
\usepackage[sort&compress,capitalize,nameinlink]{cleveref}
\crefname{example}{Ex.}{Exs.}

\crefrangeformat{equation}{\upshape(#3#1#4)\textendash(#5#2#6)}

\usepackage{mathtools}
\usepackage[para,online,flushleft]{threeparttable}
\usepackage{multirow}

\usepackage{physics}
\usepackage{float}
\newcommand{\eps}{\varepsilon}
\DeclareMathOperator*{\argmin}{argmin}

\DeclareMathOperator{\zer}{zer}

\DeclareMathOperator{\dist}{dist}

\DeclareMathOperator{\dom}{dom}

\DeclareMathOperator{\gr}{graph}
\DeclareMathOperator{\Int}{int}

\DeclareMathOperator{\Id}{Id}

\renewcommand{\iff}{\Leftrightarrow}

\renewcommand{\emptyset}{\varnothing}
\newcommand{\eqdef}{\triangleq}
\newcommand{\wlim}{\rightharpoonup}
\newcommand{\qbox}[1]{\quad\hbox{#1}\quad}

\newcommand{\scrC}{\mathcal{C}}

\newcommand{\scrG}{\mathcal{G}}
\newcommand{\scrH}{\mathcal{H}}

\newcommand{\scrK}{\mathcal{K}}

\newcommand{\scrS}{\mathcal{S}}

\newcommand{\scrX}{\mathcal{X}}

\newcommand{\setC}{\mathsf{C}}

\newcommand{\R}{\mathbb{R}}

\newcommand{\N}{\mathbb{N}}

\DeclareMathOperator{\NC}{\mathsf{N}}

\newcommand{\bC}{{\mathbf{C}}}

\newcommand{\opA}{\mathsf{A}}
\newcommand{\opB}{\mathsf{B}}

\newcommand{\opD}{\mathsf{D}}
\newcommand{\opM}{\mathsf{M}}

\usepackage[ruled,vlined]{algorithm2e}
\usepackage{algpseudocode}								

\theoremstyle{plain}
\newtheorem{theorem}{Theorem}

\newtheorem*{corollary*}{Corollary}
\newtheorem{lemma}[theorem]{Lemma}
\newtheorem{proposition}[theorem]{Proposition}

\theoremstyle{definition}

\newtheorem*{definition*}{Definition}
\newtheorem*{problem*}{Problem}
\newtheorem{assumption}{Assumption}
\newtheorem*{SH}{Standing Hypothesis}

\theoremstyle{remark}
\newtheorem{remark}{Remark}
\newtheorem*{remark*}{Remark}
\newtheorem*{notation*}{Notational remark}
\newtheorem{example}{Example}

\numberwithin{theorem}{section}
\numberwithin{remark}{section}
\numberwithin{example}{section}

\DeclarePairedDelimiter{\inner}{\langle}{\rangle}

\title{Inertial forward-backward algorithm with exterior penalization and Tikhonov regularization}
\date{\today}

\author[1,2]{\small Siqi Qu}
\author[1]{\small Mathias Staudigl}
\author[2]{\small Juan Peypouquet} 

\affil[1]{\footnotesize Mannheim University, Department of Mathematics, B6 26, 68159 Mannheim, Germany\\
(\href{mailto:mathias.staudigl@uni-mannheim.de}{qu.siqi@uni-mannheim.de,mathias.staudigl@uni-mannheim.de})}

\affil[2]{\footnotesize Rijksuniversiteit Groningen, Faculty of Science and Engineering, Systems, Control and Optimization — Bernoulli Institute, Groningen, The Netherlands\\
(\href{mailto:j.g.peypouquet@rug.nl}{j.g.peypouquet@rug.nl})}

\begin{document}

\maketitle

\begin{abstract}
In a real Hilbertian setting, we develop in this paper numerical splitting techniques guaranteeing strong convergence to the least norm solution of constrained variational inequalities. We develop a multiscale inertial forward-backward splitting algorithm for solving constrained monotone inclusion problems with multiscale penalization and vanishing Tikhonov regularization. The proposed framework accommodates smooth, nonsmooth, and mixed smooth--nonsmooth penalty operators, providing a unified treatment of a broad class of constrained monotone inclusion problems. In this general framework, we establish weak convergence of the generated iterates. By introducing a discrete Tikhonov central path, we further prove strong convergence to the minimum-norm solution of the problem under a mild constraint qualification condition on the problem data.
\end{abstract}

\section{Introduction}
\label{sec:Intro}
 Let $\scrH$ be a real Hilbert space with inner product $\inner{\cdot,\cdot}$ and associated norm $\norm{\cdot}$.
This paper is concerned with the monotone inclusion problem 
\begin{equation}\label{eq:MI}\tag{P}
0\in \Phi x:= \opA x+\opD x+\NC_{\scrC}(x),
\end{equation}
where $\opA$ and $\opD$ are Hilbert space valued operators, and $\scrC\subseteq\scrH$ is constrained domain. This is a three-operator formulation of a general class of variational problems, where a constrained equilibrium of the sum of two maximally monotone operators $\opA+\opD$ is requested over a domain $\scrC$, which admits a efficiently implementable representation in terms of penalty operators. This abstract equilibrium problem has been the starting point for a burgeoning literature in mathematical optimization, addressing the resolution of constrained equilibrium problems using operator splitting techniques. The article \cite{Attouch:2010aa} was the starting point, with a long list of follow-up contributions \cite{AttCzarPey11,attouch2011prox,AttCzar18,Banert:2015aa,Bot:2014aa,noun2013forward,peypouquet2026asymptotic}, among many others.  
\begin{mdframed}
\begin{SH}\label{ass:Standing}
The following properties of the data of the splitting problem \eqref{eq:MI} shall be in place throughout the paper:
\begin{enumerate}
    \item[(SH1)] $\opA:\scrH\to2^{\scrH}$ is maximally monotone operator.
    \item[(SH2)] $\opD:\scrH\to\scrH$ is monotone and $\frac{1}{\eta}$-Lipschitz.
    \item[(SH3)] $\scrC:=\zer(\opB_{1})\cap\zer(\opB_{2})\neq\emptyset$ is the set of common zeroes of a $\mu$-cocoercive operator $\opB_{1}:\scrH\to\scrH$, and a (potentially set-valued) monotone operator $\opB_{2}:2^{\scrH}\to\scrH$.
    \item[(SH4)] $\Phi=\opA+\opD+\NC_{\scrC}$ is maximally monotone and $\scrS=\zer(\Phi)\neq\emptyset$.
\end{enumerate}
\end{SH}
\end{mdframed}
(SH3) is a decomposition property of the constrained domain over which an equilibrium is sought for. It is naturally satisfied by many practical problems studied in the literature \cite{attouch1996dynamical,AttCzar18,AttCzarPey11,cabot2005,Peypouquet:2012aa}, and we describe a couple of concrete instances below. Standing hypothesis (SH4) is a constraint qualification type condition, and a standard hypothesis in the context of constrained variational inequalities \cite{Bot:2014aa,Bot:2016aa}. General conditions ensuring this hypothesis can be found in \cite{BauCom16}.
\subsection{Motivating examples}
\begin{example}[Simple Bilevel Optimization]
\label{ex:SBP}
Let $f\in\bC^{1,1}_{1/\mu}(\scrH)$ and $r\in\Gamma_{0}(\scrH)$. A simple bilevel optimization problem \cite{sabach2017first} is formulated as  
\begin{align*}
 & \min f(x) +r(x)\\ 
\text{s.t.: } & x\in\scrC:=\argmin\{g(y):y\in\scrH\},
\end{align*}
where $g$ is a convex and Fr\'{e}chet differentiable function. By Fermat's optimality condition 
$$
x\in\scrC\iff 0=\nabla g(x).
$$
Hence, $\scrC=\zer(\nabla g)$ defines the set of solutions to the lower-level optimization problem over which a minimizer of the function $f+r$ is searched for. Identifying the data as $\opA=\partial r,\opD=\nabla f$ and $\opB=\nabla g$ gives rise to an inclusion problem of the form \eqref{eq:MI}.  
\end{example}

\begin{example}[Constrained Variational Inequalities]
\label{eq:CVI}
Let $\scrC\subset\scrH$ be a set for which we know a convex function $\Psi\in\bC^{1,1}_{L_{\Psi}}(\scrH)$ having the properties that $\Psi\geq 0$ and $\scrC=\Psi^{-1}(0)$. Then, $\opB=\nabla\Psi$ is a maximally monotone operator whose zero set is $\scrC$. Given a mapping $\opD:\scrH\to\scrH$ and a proper convex and lower semi-continuous function $h:\scrH\to(-\infty,\infty]$, we search for a solution of the variational inequality of the second kind  
\begin{equation}
\text{Find $\bar{x}\in\scrC$ such that } \inner{\opD(\bar{x}),x-\bar{x}}+h(x)-h(\bar{x})\geq 0 \qquad\forall x\in\scrC.
\end{equation}
This is equivalent to the inclusion 
$$
0\in \opD(\bar{x})+\partial h(\bar{x})+\NC_{\scrC}(x),
$$
and arises frequently in optimal control problems \cite{de2011optimal,de2016strong}. 
\end{example}

\begin{example}[Generalized Nash equilibrium problem]
\label{ex:GNEP}
The generalized Nash equilibrium problem (GNEP) in infinite dimensions is an important mathematical problem of distributed optimal control \cite{hintermuller2015generalized,dreves2018generalized,kanzow2019multiplier}. Let $i=1,2,\ldots,N$ denote a set of agents (players), each characterized by a real Hilbert space $(\scrH_{i},\inner{\cdot,\cdot}_{i})$, and a functions 
$$
d_{i}:\bigotimes_{i=1}^{N}\scrH_{i}\to\R,\;x=(x_{1},\ldots,x_{N})=(x_{i},x_{-i})\mapsto d_{i}(x_{i},x_{-i})
$$
assumed to be continuously differentiable, with a Lipschitz continuous gradient. Let $\scrH=\bigotimes\scrH_{i}$ denote the product space. For all $i\in\{1,\ldots,N\}$, let $\scrX_{i}\subseteq\scrH_{i}$ be nonempty closed convex subspace. The decisive difference to the classical Nash equilibrium problem (NEP) is the presence of coupling constraints. Following the majority of applications, we introduce such coupling constraints via a bounded linear operator $L:\scrH\to\scrG$, and define the game's feasible set 
$$
\scrC:=\{x\in\scrH\vert x_{i}\in\scrX_{i}\quad\forall 1\leq i\leq N,\text{ and }Lx \in \scrK\}
$$
where $\scrK\subseteq\scrG$ is a closed convex set. In most applications $\scrK$ is a closed convex cone of functions. The GNEP is to find an $N$-tuple $x^{*}=(x_{i}^{*})_{1\leq i\leq N}$ satisfying 
$$
x_{i}^{*}\in\argmin\{d_{i}(x_{i},x_{-i}^{*})+r_{i}(x_{i})\vert (x_{i},x^{*}_{-i})\in\scrC\}\qquad \forall i=1,\ldots,N.
$$
We can model this problem as an inclusion of type \eqref{eq:MI}, by identifying the involved operators as 
$$
\opA=\bigotimes_{1\leq i\leq n}\partial r_{i},\text{ and }\opD=(\nabla_{x_{1}}d_{1}(x),\ldots,\nabla_{x_{N}}d_{N}(x)).
$$
Note that the constrained domain of the GNEP allows for a further decomposition. Let $\opB_{2}:=\partial\delta_{\scrX}:\scrH\to(-\infty,\infty]$ the convex indicator function of the set $\scrX\subseteq\scrH$, and $\opB_{1}=\nabla \Psi_{1}$, where $\Psi_{1}(x)=\frac{1}{2}\dist(Lx,\scrK)^{2}$, we obtain $\scrC=\zer(\opB_{2})\cap\zer(\opB_{1})$. Effective numerical schemes should exploit this domain decomposition character in their iterations. 
\end{example}

\subsection{Related work and contribution}
The general approach for resolving \eqref{eq:MI} we pursue in this paper is the penalty method. Replacing the normal cone with the defining operators $\opB_{1}$ and $\opB_{2}$, and scaling them with a non-decreasing penalty sequence $(\beta_{n})_{n}\subset(0,\infty)$ (technically $\beta_{n}\to\infty$), we device a numerical splitting method which asymptotically approaches the solution set $\scrS$. Such exterior penalization techniques can be related to multiscale effects in associated continuous-time dynamical systems, which have recently been studied in our previous work \cite{peypouquet2026asymptotic}, building on seminal insights developed in the above cited references. From a practical perspective, the perhaps most important instance of the constrained variational inequality \eqref{eq:MI} is the simple bilevel optimization problem, described in Example \ref{ex:SBP}. In that application, the penalty technique has been popularized by the influential work \cite{Solodov:2007aa}. Most of the established numerical algorithms use the forward-backward splitting technique. The aim of this work is to endow the forward-backward penalty scheme from \cite{Bot:2014aa} with inertial effects in the spirit Nesterov's method for convex optimization. Besides inertial terms, our numerical strategies features multi-scale effects embodied by penalty terms and an injected Tikhonov regularization strategy. The main contributions of this paper are two asymptotic convergence results on the generated trajectory under well-established conditions on the Fitzpatrick function associated with the penalty operators. Specifically, the two results are the following:
\begin{enumerate}
    \item Under the listed standing hypothesis, we prove weak ergodic convergence of the inertial forward-backward penalty method to the solution set $\scrS$. This result is derived in the three paradigmatic scenarios: 
    \begin{enumerate}
        \item $\opB_{1}=0$ and $\opB_{2}$ maximally monotone;
        \item $\opB_{2}=0$ and $\opB_{1}$ $\mu$-cocoercive; 
        \item $\opB_{1}=\nabla\Psi_{1}$ and $\opB_{2}=\partial\Psi_{2}$ for convex functions $\Psi_{i}:\scrH\to\R\cup\{-\infty,\infty\}$, with $\Psi_{1}$ being Lipschitz smooth and $\Psi_{2}$ closed convex proper.
    \end{enumerate}
    \item If the (unique) least-norm element of the solution set $\scrS$ lies in the interior of the domain of the set-valued operator $\opA$, a fine analysis establishes strong convergence of the iterates generated by our algorithm to the least norm solution. 
\end{enumerate}
The above described results generalize recent weak and strong convergence results for penalty-regulated dynamical systems, designed to solve the splitting problem \eqref{eq:MI} via the addition of inertial effects. The closest contribution we could identify in the literature is the recent paper \cite{Bot:2019aa}. This article is also concerned with the study of forward-backward type splitting methods for the general constrained variational inequality \eqref{eq:MI}. The key difference between that paper and ours is the design of the numerical scheme. While \cite{Bot:2019aa} inject an extrapolated sequence into the resolvent operator, our numerical splitting strategy is formulated along the classical inertial algorithms inspired by Nesterov's accelerated gradient method \cite{nesterov2018lectures}. Additionally, our analysis explicitly allows for domain decomposition of the feasible set $\scrC$. The general decomposition hypothesis (SH3) suggests that the set-valued operator $\opB_{2}$ is included in the backward resolvent step, thus making it an explicitly time-dependent object. Consequently, our Lyapunov analysis is fundamentally different, both in the weak convergence as well as in the strong convergence theorems. Finally, our strong convergence argument relies on a detailed (and in our opinion novel) analysis of the central path associated with a sequence of Tikhonov regularized operators, which leverages the local topological structure of maximally monotone operators with non-empty interiors of their domains. From a mathematical perspective, this can be seen as our most intersting contribution.\\ 

The rest of this paper is organized as follows. Section \ref{sec:IFBwithTikhonov} settles the general notation employed in this article, and develops the weak ergodic convergence results. Section \ref{strongconvergence} starts with establishing general properties of a central path trajectory, representing the sequence of unique solutions of a family of Thikhonov regularized auxiliary operators. These preparatory results are then used in the proof of Theorem \ref{th:strongconvergence_main}, where we demonstrate the strong convergence of the produced algorithmic sequence. Section \ref{sec:numerics} contains numerical experiments performed with our method on the image inpainting problem.

\section{Asymptotic Analysis and Weak Convergence}
\label{sec:IFBwithTikhonov}
\subsection{Preliminaries}
\label{sec:prelims}

For the reader's convenience we present first some notations which are used throughout the paper. The symbols $\wlim$ and $\to$ denote weak and strong convergence, respectively. For a function $f:\scrH\to\bar{\R}$ we denote by $\dom(f)=\{x\in\scrH\vert f(x)<\infty\}$ its effective domain and say that $f$ is proper if $\dom(f)\neq\emptyset$ and $f(x)\neq-\infty$ for all $x\in\scrH$. If $f$ is convex, we let $\partial f(x)=\{u\in\scrH\vert f(y)\geq f(x)+\inner{y-x,v}\quad\forall y\in\scrH\}$ the subdifferential of $f$ at $x\in\dom(f)$. $\Gamma_{0}(\scrH)$ collects the totality of closed convex proper functions.

Let $\scrC\subseteq\scrH$ be a nonempty set. The indicator function of $\setC$, $\delta_{\setC}:\scrH\to\bar{\R}$, is the function satisfying $\delta_{\scrC}(x)=0$ if $x\in\scrC$ and $+\infty$ otherwise. The subdifferential of the indicator function is the normal cone 
\[
\NC_{\scrC}(x)\eqdef\left\{\begin{array}{ll} \{u\in\scrH\vert \inner{y-x,u}\leq 0\} & \text{if }x\in\scrC,\\ 
\emptyset & \text{else}.
\end{array}\right.
\]
The normal cone of a set $C\subset\scrH$ is defined as $\NC_{C}(x)=\{p\in\scrH\vert \inner{y-x,p}\leq 0\quad\forall y\in C\}$ if $x\in C$, and $\NC_{C}(x)=\emptyset$, otherwise.

For a set-valued operator $\opM:\scrH\to 2^{\scrH}$ we denote by $\gr(\opM)=\{(x,u)\in\scrH\times\scrH\vert u\in \opM(x)\}$ its graph, $\dom(\opM)=\{x\in\scrH\vert \opM(x)\neq\emptyset\}$ its domain, and by $\opM^{-1}:\scrH\to 2^{\scrH}$ its inverse, defined by 
\[
(u,x)\in \gr(\opM^{-1})\iff (x,u)\in\gr(\opM).
\]
We let $\gr(\opM)=\{(x,x^{*})\in\scrH\times\scrH\vert x^{*}\in \opM(x)\}$ be the graph of $\opM$ and $\zer(\opM)=\{x\in\scrH\vert 0\in \opM(x)\}$ denote the set of zeros of $\opM$.  $(x,p)\in\gr(\NC_{\scrC})$ is equivalent to the fact that the support function to the set $C$ in direction $p$, $\sigma_{\scrC}(p)=\sup_{y\in\scrC}\inner{y,p}$ satisfies $\sigma_{\scrC}(p)=\inner{x,p}$. An operator $\opM$ is monotone if $\inner{x-y,u-v}\geq 0$ for all $(x,u),(y,v)\in\gr(\opM)$. A monotone operator $\opM$ is maximally monotone if there exists no proper monotone extension of the graph of $\opM$ on $\scrH\times\scrH$. The resolvent of $\opM$, $J_{\opM}:\scrH\to 2^{\scrH}$ is defined by $p\in J_{\opM}(x) \iff x\in p+\opM(p)$. 

The Fitzpatrick function associated to a monotone operator $\opM$ will be the key to formulate tractable conditions to prove the asymptotic convergence of our scheme. It is defined as 
\begin{equation}\label{eq:FP}
\varphi_{\opM}:\scrH\times\scrH\to\bar{\R},(x,p)\mapsto \varphi_{\opM}(x,p):=\sup_{(y,v)\in\gr(\opM)}\{\inner{x,v}+\inner{y,p}-\inner{y,v}\} 
\end{equation}

\begin{lemma} \label{L:transportation_formula}
Let $f$ be convex and $\ell$-smooth, and let $u$ be an unconstrained minimizer of $f$. For every $c>0$, we have
$$2\big(f(z)-\min(f)\big)+2\langle u-y,\nabla f(y)\rangle \le (c-\frac{1}{\ell})\|\nabla f(y)\|^2+\left[\frac{1}{c}+\ell\right]\|z-y\|^2.$$
\end{lemma}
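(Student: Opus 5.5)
We combine the descent lemma with the co-coercivity inequality for convex $\ell$-smooth functions, and finish with Young's inequality. Since $u$ is an unconstrained minimizer, $\nabla f(u)=0$ and $\min f = f(u)$.

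\textbf{Step 1 (descent lemma).} By $\ell$-smoothness,
$$f(z)\le f(y)+\langle \nabla f(y),z-y\rangle+\frac{\ell}{2}\|z-y\|^2 .$$

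\textbf{Step 2 (co-coercivity inequality at $y$ and $u$).} For a convex $\ell$-smooth function we have
$$f(u)\ge f(y)+\langle\nabla f(y),u-y\rangle+\frac{1}{2\ell}\|\nabla f(y)-\nabla f(u)\|^2 .$$
With $\nabla f(u)=0$ this gives
$$f(y)-\min f\le \langle \nabla f(y),y-u\rangle-\frac{1}{2\ell}\|\nabla f(y)\|^2 .$$

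\textbf{Step 3 (combine).} Adding Steps 1 and 2 and multiplying by $2$ yields
$$2\big(f(z)-\min f\big)\le 2\langle\nabla f(y),z-u\rangle-\frac1\ell\|\nabla f(y)\|^2+\ell\|z-y\|^2 .$$
Adding $2\langle u-y,\nabla f(y)\rangle$ to both sides turns the inner product on the right into $2\langle \nabla f(y),z-y\rangle$.

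\textbf{Step 4 (Young's inequality).} We bound
$$2\langle\nabla f(y),z-y\rangle\le c\|\nabla f(y)\|^2+\frac1c\|z-y\|^2 .$$
Substituting this into Step 3 gives exactly the claimed estimate, with coefficient $c-\frac1\ell$ on $\|\nabla f(y)\|^2$ and $\frac1c+\ell$ on $\|z-y\|^2$.

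The only nonroutine ingredient is the sharpened convexity inequality in Step 2, which comes from the Baillon–Haddad / co-coercivity property. If it needs justification, apply the descent lemma at $u$ along the step $u-\frac1\ell(\nabla f(u)-\nabla f(y))$ to the convex function $g=f-\langle\nabla f(y),\cdot\rangle$. This function is minimized at $y$, so $g(y)\le g(u)-\frac{1}{2\ell}\|\nabla g(u)\|^2$, and since $\nabla g(u)=\nabla f(u)-\nabla f(y)$ this is the standard proof of Step 2. Everything else is bookkeeping of constants.
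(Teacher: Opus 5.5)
Your proof is correct and follows the paper's argument. The paper likewise combines the co-coercivity inequality $2f(u)\ge 2f(y)+2\langle\nabla f(y),u-y\rangle+\frac{1}{\ell}\|\nabla f(y)\|^2$ (citing Nesterov, Thm.~2.1.5) with the descent lemma and Young's inequality; the only differences are that it applies Young before adding the two estimates, and that your derivation of the co-coercivity step via $g=f-\langle\nabla f(y),\cdot\rangle$ replaces the citation and is also correct.
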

\begin{proof}
On the one hand, we have (cf. \cite[][Theorem 2.1.5]{nesterov2018lectures})
$$2f(u)\ge 2f(y)+2\inner{\nabla f(y),u-y}+\frac{1}{\ell}\|\nabla f(y)\|^2.$$
On the other,
$$2f(z) \le 2f(y)+2\inner{\nabla f(y),z-y}+\ell\|z-y\|^2  \le 2f(y)+c\|\nabla f(y)\|^2+\frac{1}{c}\|z-y\|^2+\ell\|z-y\|^2$$
The proof is completed by combining the two inequalities and rearranging the terms.
\end{proof}

\begin{lemma}\label{L:real1}
Consider a real sequence $(\rho_n)$, along with nonnegative sequences $(\omega_n)$, $(e_n)$, $(d_n)$, $(a_n)$ and $(b_n)$. Suppose that, 
\begin{equation}
\rho_{n+1} = \omega_{n+1}-a_n\omega_n+b_n
\end{equation}
Additionally, assume that there exists $n_{0}$ such that for every $n\ge n_0$, one has 
\begin{equation}\label{eq:RS}
\rho_{n+1}+e_n\le\rho_n+d_n. 
\end{equation}
Then the following statements hold true:
\begin{itemize}
    \item [(i)] 
    If $\sup_{n\ge n_0}a_n\le 1$ and $\sum_{n\ge n_0} d_n<\infty$, then $\lim_{n\to\infty}\rho_n$ exists, and $\sum_{n\ge n_0} e_n<\infty$. 
    \item [(ii)] If, moreover, $\lim_{n\to\infty}a_n=a<1$ and $\lim_{n\to\infty}b_n$ exists, then $\lim_{n\to\infty}\omega_n$ exists.
\end{itemize}
\end{lemma}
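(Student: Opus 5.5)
The plan is to handle (i) by a monotonicity argument for a shifted version of $(\rho_n)$. I then use the nonnegativity of $(\omega_n)$ to rule out divergence to $-\infty$. For (ii) I treat the defining relation as a perturbed linear recursion for $\omega_n$ with contraction factor asymptotically $a<1$.

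\emph{Step 1 (monotone auxiliary sequence).} For $n\ge n_0$ set $\tilde\rho_n:=\rho_n+\sum_{k\ge n}d_k$, which is well defined since $\sum d_k<\infty$. Because $e_n\ge 0$, inequality \eqref{eq:RS} gives $\tilde\rho_{n+1}\le\tilde\rho_n-e_n\le\tilde\rho_n$. Hence $\tilde\rho_n$ is nonincreasing and has a limit $\ell\in[-\infty,\infty)$. Since the tails $\sum_{k\ge n}d_k\to 0$, $\lim\rho_n$ also exists and equals $\ell$.

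\emph{Step 2 (excluding $\ell=-\infty$).} This is the only step needing care, and the key is to use the structure $\rho_{n+1}=\omega_{n+1}-a_n\omega_n+b_n$. Suppose $\ell=-\infty$. Fix $M>0$; then $\rho_{n+1}\le -M$ for all $n\ge n_1$. Using $a_n\le 1$, $\omega_n\ge 0$ and $b_n\ge 0$, we get
$$\omega_{n+1}=a_n\omega_n+\rho_{n+1}-b_n\le \omega_n-M\qquad(n\ge n_1).$$
Therefore $\omega_{n_1+k}\le\omega_{n_1}-kM$, which becomes negative for large $k$ and contradicts $\omega_n\ge 0$. So $\ell$ is finite. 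Telescoping $\tilde\rho_{n+1}+e_n\le\tilde\rho_n$ from $n_0$ to $N$ then gives
$$\sum_{n=n_0}^N e_n\le \tilde\rho_{n_0}-\tilde\rho_{N+1}\le\tilde\rho_{n_0}-\ell<\infty,$$
which proves (i).

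\emph{Step 3 (part (ii)).} Write $\omega_{n+1}=a_n\omega_n+c_n$ with $c_n:=\rho_{n+1}-b_n$. By (i) and the extra hypothesis on $(b_n)$, $c_n\to c:=\lim\rho_n-\lim b_n$. Note $a\ge 0$, since $a_n\ge 0$.

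First I show boundedness of $(\omega_n)$. Pick $a'\in(a,1)$; for $n$ large we have $a_n\le a'$ and $c_n\le C$, so $\omega_{n+1}\le a'\omega_n+C$. Iterating gives $\omega_n\le \max\{\omega_{n_2},C/(1-a')\}+C$, so $(\omega_n)$ is bounded.

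Now let $L=\limsup\omega_n$ and $l=\liminf\omega_n$, both finite. Since $a_n\to a\ge0$ and $\omega_n$ is bounded, $\limsup a_n\omega_n=aL$ and $\liminf a_n\omega_n=al$. Passing to limsup and liminf in the recursion yields $L\le aL+c$ and $l\ge al+c$. Dividing by $1-a>0$ gives $L\le c/(1-a)\le l$. Hence $L=l$, and $\lim\omega_n=c/(1-a)$ exists.

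I expect the main obstacle to be Step 2: monotonicity alone only gives $\rho_n\to\ell\in[-\infty,\infty)$, and finiteness of $\ell$ genuinely requires exploiting $\omega_n\ge0$ together with $a_n\le 1$ and $b_n\ge0$.
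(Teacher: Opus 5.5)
Your proof is correct and takes essentially the paper's route. For (i), you shift $(\rho_n)$ to a nonincreasing sequence (your tail-shifted $\tilde\rho_n$ differs from the paper's $\Omega_n$ only by a constant) and rule out divergence to $-\infty$ via $\omega_{n+1}\le\omega_n+\rho_{n+1}$ and $\omega_n\ge 0$; for (ii), you treat $\omega_{n+1}=a_n\omega_n+c_n$ as a perturbed contraction with limit $c/(1-a)$. The differences are cosmetic: you telescope directly instead of citing Robbins--Siegmund, and you prove the boundedness and $\limsup/\liminf$ step that the paper leaves implicit after shifting to $x_n=\omega_n-L$.
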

\begin{proof}
We consider first the long-time behavior of the sequence $(\rho_{n})_{n}$ satisfying the bound \eqref{eq:RS}. If $(d_{n})_{n}$ is summable and $\rho_n$ is bounded from below, the Robbins-Siegmund lemma implies that $\lim_{n\to\infty}\rho_{n}$ exists and $(e_{n})_{n}$ is summable. It only needs to be shown that indeed the sequence $(\rho_{n})_{n}$ is bounded from below. To see this, let us write $\Omega_n:=\rho_n-\sum_{j=0}^{n-1}d_j$, which is non-increasing and $\Omega_n\le\rho_n$. If $\rho_n$ is not bounded from below, for every $M>0$, there is $N$ such that $\Omega_n\le\Omega_N\le \rho_{N}\le-M$ for every $n\ge N$. It follows that  $\sum_{j=0}^{\infty}d_j-M\ge \Omega_{n+1}+\sum_{j=0}^{n}d_j\ge \rho_{n+1}\ge\omega_{n+1}-\omega_n$. Taking $M=1+\sum_{j=0}^{\infty}d_j$, we see that $\omega_{n+1}-\omega_n\le-1$ for every $n\ge N$, thus it cannot remain nonnegative, yielding a contradiction. This proves claim (i). 

We now prove the claimed convergence properties of $(\omega_{n})$. To that end, define 
$$
c_{n}:=\rho_{n+1}-b_{n}
$$
Since $\lim_{n}b_{n}$ is assumed to exist, we have $\lim_{n\to\infty}c_{n}=c=\rho-b$. We obtain $\omega_{n+1}=c_{n}+a_{n}\omega_{n}$. Let 
$$
L:=\frac{c}{1-a}\text{ and define }x_{n}:=\omega_{n}-L.
$$
In terms of these derived objects, we obtain the representation 
$$
x_{n+1}=a_{n}x_{n}+r_{n},\quad r_{n}:=(c_{n}-c)+(a_{n}-a)L.
$$
We have $r_{n}\to 0$ and $a_{n}\to a<1$. Hence, $x_{n}\to 0$, and therefore $\omega_{n}\to L.$
\end{proof}





\begin{lemma}[Opial] 
\label{lem:Opial}
Let $\scrS$ be a nonempty set of $\scrH$ and $(x_{n})_{n}$ be a sequence in $\scrH$ such that the following two conditions hold:
\begin{itemize}
\item[(i)] for every $x\in\scrS$, $\lim_{k\to\infty}\norm{x_{n}-x}$ exists;
\item[(ii)] every sequential weak cluster point of $(x_{n})$ is in $\scrS$;
\end{itemize}
Then $(x_{k})_{k}$ converges weakly to a point in $\scrS$.
\end{lemma}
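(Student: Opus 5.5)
The argument is the classical one, done in two stages: first show the weak cluster points of $(x_n)$ are unique, then use boundedness to get weak convergence of the whole sequence.

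\textbf{Boundedness.} Since $\scrS\neq\emptyset$, fix some $x\in\scrS$. By (i), $\norm{x_n-x}$ converges, so it is bounded, and hence $(x_n)_n$ is bounded in $\scrH$. Bounded sets in a Hilbert space are weakly sequentially relatively compact, so $(x_n)_n$ has at least one sequential weak cluster point. By (ii), every such cluster point lies in $\scrS$.

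\textbf{Uniqueness of the cluster point (main step).} Let $\bar x$ and $\hat x$ be two sequential weak cluster points, with $x_{n_k}\wlim\bar x$ and $x_{m_k}\wlim\hat x$. Both belong to $\scrS$, so by (i) the limits $\ell(\bar x)=\lim_n\norm{x_n-\bar x}$ and $\ell(\hat x)=\lim_n\norm{x_n-\hat x}$ exist. Expanding the squares gives the identity
\[
\norm{x_n-\bar x}^2-\norm{x_n-\hat x}^2 = 2\inner{x_n,\hat x-\bar x}+\norm{\bar x}^2-\norm{\hat x}^2 .
\]
The left-hand side converges, so $\inner{x_n,\hat x-\bar x}$ converges to some limit $L$. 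Evaluating $L$ along the two subsequences yields
\[
\inner{\bar x,\hat x-\bar x}=L=\inner{\hat x,\hat x-\bar x}.
\]
Subtracting the two expressions gives $\norm{\hat x-\bar x}^2=0$, that is, $\bar x=\hat x$.

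\textbf{Conclusion.} Suppose $(x_n)_n$ did not converge weakly to the unique cluster point $\bar x$. Then there would be some $y\in\scrH$, some $\varepsilon>0$ and a subsequence along which $\abs{\inner{x_n-\bar x,y}}\geq\varepsilon$. This subsequence is bounded, so it has a further weakly convergent subsequence. Its weak limit is a cluster point of $(x_n)_n$, hence equals $\bar x$, which contradicts $\abs{\inner{x_n-\bar x,y}}\geq\varepsilon$. Therefore $x_n\wlim\bar x\in\scrS$.

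The only genuinely nontrivial step is the uniqueness argument based on the polarization identity. The rest is standard weak compactness in Hilbert spaces.
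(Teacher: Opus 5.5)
Your proof is correct and complete. The paper gives no proof of this lemma and simply quotes it as the classical Opial lemma (cf.\ \cite{BauCom16}); your argument is the standard one: boundedness from (i), uniqueness of the weak cluster points via the expansion $\norm{x_n-\bar x}^2-\norm{x_n-\hat x}^2=2\inner{x_n,\hat x-\bar x}+\norm{\bar x}^2-\norm{\hat x}^2$, and a subsequence argument for the whole sequence. You also correctly read the index typos ($k$ versus $n$) in the statement as referring to the single sequence $(x_n)$.
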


\subsection{The splitting method}
Motivated by the second-order dynamical systems with penalty terms introduced in \cite{boct2018second} and by the inertial forward--backward framework of \cite{Bot:2019aa}, we propose a multiscale inertial forward--backward algorithm for solving \eqref{eq:MI} along the lines of \cite{peypouquet2026asymptotic}. To asymptotically enforce feasibility, we add to include into the backward step the set set-valued operator $\opB_{2}$, leading to a family of monotone operators $(\opA_{n}):\scrH\to 2^{\scrH}$, defined as $\opA_n:=\opA+\beta_n\opB_2$. We implicitly assume that the resolvent $(\Id+\opA_{n})^{-1}=J_{\opA_{n}}$ is a single-valued function, and easy to compute. General conditions for the sum of maximally monotone operators being maximally monotone can be found in \cite{BauCom16}. To induce stability and eventually establish our strong convergence results, we add to the forward step a scaled identity operator $\eps_{n}\Id_{\scrH}$, with $\eps_{n}$ being a positive and asymptotically vanishing operator. Additionally, we include the cocoercive penalty operator $\opB_{1}$ in to the forward step as well. In total, we obtain the monotone and Lipschitz continuous operator $V_{n}:\scrH\to\scrH$, defined by 
\[
V_n:=\opD+\beta_n\opB_1+\eps_{n}\Id_{\scrH}.
\]
With this splitting, we thus simultaneously incorporate inertial effects, penalization, and Tikhonov regularization. We consider the iterative scheme
\begin{equation} \label{E:Algorithm4}\tag{IFBT}
\left\{\begin{array}{rcl}
y_n & = & x_n+\alpha_n(x_n-x_{n-1}) \\
x_{n+1}& = & J_{\lambda_n\opA_n}\big(y_n-\lambda_nV_ny_n\big).
\end{array}\right.
\end{equation}
where $x_1=x_0\in\scrH$. For ease of notation, we set $\opD_{\eps_{n}}:=\opD+\eps_{n}\Id_{\scrH}$. This discrete algorithm preserves key features of the second-order continuous-time system, including adaptive damping and operator scaling, and provides a natural framework for analyzing convergence in the iterative setting.
\begin{remark}
Inertial effects in forward-backward splitting methods with penalty terms have been studied as well in \cite{Bot:2019aa}. We would like to emphasize that a key distinction from \cite{Bot:2019aa} lies in the evaluation of the forward operator. While the forward step in \cite{Bot:2019aa} is performed at the current iterate $(x_n)$, we evaluate it at the extrapolated point $(y_n)$,  yielding a genuinely extrapolated inertial forward--backward scheme.
\end{remark}
Throughout the paper, we make the following general assumptions on the long-run behavior of the parameter sequences; More specific assumptions will be introduced in the particular theoretical statements to follow. 
\begin{assumption}\label{ass:assumptionbasic} Let $(\lambda_n)_{n\geq1},(\beta_n)_{n\geq1}$ and $(\eps_n)_{n\geq1}$ be sequences of positive real numbers satisfying
\begin{itemize}
\item[(i)] $(\eps_n)_{n\geq1}$ is non-increasing and $\lim_{n\to \infty}\eps_n=0$;
\item[(ii)] $(\beta_n)_{n\geq1}$ is non-decreasing and $\lim_{n\to \infty}\eps_n=\infty$;
\item[(ii)] $(\lambda_n)_{n\geq1}\in \ell^2_{+}(\N)\setminus \ell^1_{+}(\N)$.
\end{itemize}
\end{assumption}

\subsection{Asymptotic properties of the iterates}

Now we establish a basic estimate for the iterates generated by Algorithm \eqref{E:Algorithm4}. This inequality will play a central role in the subsequent weak and strong convergence analysis.


\begin{lemma}\label{lem:discrete}
Let $(x_n)_{n\geq 0}$ be the sequence generated by Algorithm \eqref{E:Algorithm4}. Let $(u,w) \in \gr(\opA+\opD+\NC_{\scrC})$, so that $w=v+\opD u+p$ for some $v\in \opA u$ and $p\in \NC_{\scrC}(u)$. The following inequality holds for every $\gamma_1 >0$ and $n\geq 1$:
\begin{align*}
\norm{x_{n+1}-u}^2-\norm{u-x_n}^2 & \le \alpha_n\norm{u-x_n}^2-\alpha_n\norm{u-x_{n-1}}^2+(\alpha_n+\gamma_1-1)\norm{x_n-x_{n+1}}^2 \\
& \quad  +(2\alpha_n+\gamma_1)\norm{x_n-x_{n-1}}^2 +\frac{2\lambda_n^2(1+\alpha_n^2)}
{\gamma_1}\|\opD_{\eps_{n}}u+v\|^2 \\
& \quad + \frac{2\lambda_n^2\beta_n^2(1+\alpha_n^2)}
{\gamma_1}\|\opB_1 y_n\|^2  +2\lambda_n\left[\frac{\lambda_n(1+\alpha_n^2)(L+\eps_n)^2}
{\gamma_1}-\eps_n\right]\|y_n-u\|^2 \\
& \quad + 2\lambda_n\beta_n\inner{u-y_n,\opB_1 y_n} +2\lambda_n\beta_n\inner{u-x_{n+1},\xi_{n+1}}  \\
& \quad  + 2\lambda_n\inner{u-y_n,w-p+\eps_nu}.
\end{align*}
for some $\xi_{n+1}\in\opB_{2}(x_{n+1})$.
\end{lemma}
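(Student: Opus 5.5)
The plan is to combine the optimality condition of the backward step with the monotonicity of $\opA$ and of $\opD_{\eps_n}$. The inertial terms are then handled by the standard three-point identities, and the leftover cross terms by Young's inequality. We write $L=\frac{1}{\eta}$ for the Lipschitz constant of $\opD$, so that $\opD_{\eps_n}$ is $(L+\eps_n)$-Lipschitz and $\eps_n$-strongly monotone.

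\textbf{Step 1: optimality condition.} By definition of $J_{\lambda_n\opA_n}$ with $\opA_n=\opA+\beta_n\opB_2$, the update $x_{n+1}=J_{\lambda_n\opA_n}(y_n-\lambda_nV_ny_n)$ means that there exist $a_{n+1}\in\opA x_{n+1}$ and $\xi_{n+1}\in\opB_2 x_{n+1}$ with
\[
y_n-x_{n+1}=\lambda_n\bigl(\opD_{\eps_n}y_n+\beta_n\opB_1y_n+a_{n+1}+\beta_n\xi_{n+1}\bigr).
\]
This $\xi_{n+1}$ is the one in the statement. We take the inner product with $2(u-x_{n+1})$ and use $\inner{u-x_{n+1},a_{n+1}-v}\le 0$, which holds by monotonicity of $\opA$ since $v\in\opA u$. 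This gives
\[
2\inner{u-x_{n+1},y_n-x_{n+1}}\le 2\lambda_n\inner{u-x_{n+1},\opD_{\eps_n}y_n+\beta_n\opB_1y_n+v}+2\lambda_n\beta_n\inner{u-x_{n+1},\xi_{n+1}}.
\]

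\textbf{Step 2: splitting the right-hand side.} In the first inner product we write $u-x_{n+1}=(u-y_n)+(y_n-x_{n+1})$. The $(u-y_n)$ part is handled term by term.
\begin{itemize}
\item[(a)] Since $\opD u+v=w-p$, we have $\opD_{\eps_n}u+v=w-p+\eps_nu$. Together with strong monotonicity of $\opD_{\eps_n}$,
\[
2\lambda_n\inner{u-y_n,\opD_{\eps_n}y_n+v}\le -2\lambda_n\eps_n\norm{y_n-u}^2+2\lambda_n\inner{u-y_n,w-p+\eps_nu}.
\]
\item[(b)] The penalty term $2\lambda_n\beta_n\inner{u-y_n,\opB_1y_n}$ is kept as it is.
\end{itemize}
For the $(y_n-x_{n+1})$ part we decompose
\[
\opD_{\eps_n}y_n+\beta_n\opB_1y_n+v=(\opD_{\eps_n}y_n-\opD_{\eps_n}u)+(\opD_{\eps_n}u+v)+\beta_n\opB_1y_n .
\]
We then apply Young's inequality with weight $\gamma_1/(1+\alpha_n^2)$, using $\norm{\opD_{\eps_n}y_n-\opD_{\eps_n}u}\le(L+\eps_n)\norm{y_n-u}$. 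This produces the terms with $\frac{2\lambda_n^2(1+\alpha_n^2)}{\gamma_1}$ in front of $\norm{\opD_{\eps_n}u+v}^2$, $\beta_n^2\norm{\opB_1y_n}^2$ and $(L+\eps_n)^2\norm{y_n-u}^2$, plus a multiple of $\gamma_1\norm{y_n-x_{n+1}}^2/(1+\alpha_n^2)$. Since $y_n-x_{n+1}=(x_n-x_{n+1})+\alpha_n(x_n-x_{n-1})$, that last multiple is bounded by $\gamma_1\norm{x_n-x_{n+1}}^2+\gamma_1\norm{x_n-x_{n-1}}^2$.

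\textbf{Step 3: the left-hand side.} Using $y_n=x_n+\alpha_n(x_n-x_{n-1})$ we write
\[
2\inner{x_{n+1}-u,x_{n+1}-y_n}=2\inner{x_{n+1}-u,x_{n+1}-x_n}-2\alpha_n\inner{x_{n+1}-u,x_n-x_{n-1}}.
\]
The first term equals $\norm{x_{n+1}-u}^2-\norm{x_n-u}^2+\norm{x_{n+1}-x_n}^2$. The second splits as
\[
\inner{x_{n+1}-u,x_n-x_{n-1}}=\inner{x_n-u,x_n-x_{n-1}}+\inner{x_{n+1}-x_n,x_n-x_{n-1}},
\]
where
\[
2\inner{x_n-u,x_n-x_{n-1}}=\norm{x_n-u}^2-\norm{x_{n-1}-u}^2+\norm{x_n-x_{n-1}}^2,
\]
and the cross term is bounded via $2|\inner{a,b}|\le\norm{a}^2+\norm{b}^2$. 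Moving everything except $\norm{x_{n+1}-u}^2-\norm{x_n-u}^2$ to the right yields
\[
\alpha_n\bigl(\norm{u-x_n}^2-\norm{u-x_{n-1}}^2\bigr)+(\alpha_n-1)\norm{x_n-x_{n+1}}^2+2\alpha_n\norm{x_n-x_{n-1}}^2 .
\]
Adding the Young contributions from Step 2 gives the stated coefficients $(\alpha_n+\gamma_1-1)$ and $(2\alpha_n+\gamma_1)$.

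\textbf{Main obstacle.} The delicate part is the constant bookkeeping in the Young step. The three-term decomposition must be weighted so that the factor in front of $\norm{y_n-x_{n+1}}^2$ after the bound in Step 2 is exactly $\gamma_1$ on each of the two difference norms, and each squared term ends up with coefficient $\frac{2\lambda_n^2(1+\alpha_n^2)}{\gamma_1}$. First I would try $2\lambda_n\inner{d,q_i}\le\frac{\gamma_1}{3(1+\alpha_n^2)}\norm{d}^2+\frac{3\lambda_n^2(1+\alpha_n^2)}{\gamma_1}\norm{q_i}^2$ for $i=1,2,3$. If the resulting constants come out wrong, I would group two of the pieces before applying Young, or use $\norm{d}^2\le(1+\alpha_n^2)\bigl(\norm{x_n-x_{n+1}}^2+\norm{x_n-x_{n-1}}^2\bigr)$ directly, which follows from Cauchy--Schwarz, and adjust the weights accordingly. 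The precise numerical constants are immaterial for the later use of the estimate, only its structure matters.
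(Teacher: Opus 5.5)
Your Steps 1, 2(a) and 3 match the paper's argument and are correct: the resolvent inclusion with monotonicity of $\opA$, strong monotonicity of $\opD_{\eps_n}$, and the three-point identities for the inertial terms. The gap is the Young step in Step 2, where you assert without computation that it produces the coefficient $\frac{2\lambda_n^2(1+\alpha_n^2)}{\gamma_1}$.

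Write $d=y_n-x_{n+1}$ and $q=q_1+q_2+q_3$, with $q_1=\opD_{\eps_n}y_n-\opD_{\eps_n}u$, $q_2=\opD_{\eps_n}u+v$ and $q_3=\beta_n\opB_1y_n$. To get velocity coefficients $\gamma_1$, the total Young weight on $\|d\|^2$ can be at most $\gamma_1/(1+\alpha_n^2)$. That leaves $\frac{\lambda_n^2(1+\alpha_n^2)}{\gamma_1}\|q\|^2$. The best general bound is then $\|q\|^2\le 3(\|q_1\|^2+\|q_2\|^2+\|q_3\|^2)$, with equality when $q_1=q_2=q_3$.

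Your ``first try'' gives exactly this factor $3$, and your fallbacks cannot lower it. For instance, grouping $q_1+q_2$ against $q_3$ needs Young weights summing to $\frac32\gamma_1/(1+\alpha_n^2)$. The paper's proof has the same defect: it reaches $2$ only through $\|q_1+q_2+q_3\|^2\le 2(\|q_1\|^2+\|q_2\|^2+\|q_3\|^2)$, which is false.

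In fact the inequality as stated is false, so no bookkeeping can rescue it. Take:
\begin{itemize}
\item $\scrH=\R^2$, $R$ the rotation by $\pi/2$, $e=(1,0)$;
\item $\opA=0$, $\opB_2=0$, $\opD x=Rx+e$ (monotone, $L=1$);
\item $\opB_1=R+\eps\Id$, which is $\frac{\eps}{1+\eps^2}$-cocoercive with $\zer\opB_1=\{0\}=\scrC$, so (SH1)--(SH4) hold;
\item $u=v=0$ (any $w$, with $p=w-e$);
\item at $n=1$: $\alpha_1=0$, $\eps_1=\eps$, $\beta_1=\gamma_1=1$, $\lambda_1>0$ arbitrary, and $x_0=x_1=(R+\eps\Id)^{-1}e$.
\end{itemize}
Then $q_1=q_2=q_3=e$, $x_2=x_1-3\lambda_1 e$, $\|x_1\|^2=\frac{1}{1+\eps^2}$ and $\inner{x_1,e}=\frac{\eps}{1+\eps^2}$. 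The left-hand side minus the right-hand side of the claimed inequality equals $\lambda_1^2\bigl(3-\frac{4\eps}{1+\eps^2}\bigr)\ge\lambda_1^2>0$.

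What your argument does establish, with the per-piece weights $\frac{\gamma_1}{3(1+\alpha_n^2)}$, is the lemma with $\frac{3\lambda_n^2(1+\alpha_n^2)}{\gamma_1}$ in place of $\frac{2\lambda_n^2(1+\alpha_n^2)}{\gamma_1}$. This applies to the terms in front of $\|\opD_{\eps_n}u+v\|^2$, $\beta_n^2\|\opB_1y_n\|^2$ and $(L+\eps_n)^2\|y_n-u\|^2$, and that corrected version is what should be stated. Your remark that the constants are immaterial downstream is right: the change is absorbed by redefining $c_\alpha:=\frac{2\gamma_1}{3(1+\alpha^2)}$. But that supports a corrected lemma, not a proof of the inequality as written.
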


\begin{proof}
From \eqref{E:Algorithm4}, we have
$$y_n-\lambda_nV_ny_n-x_{n+1} \in \lambda_n \opA_n x_{n+1},$$ 
so there is $\xi_{n+1} \in \opB_2(x_{n+1})$ such that
$$y_n-\lambda_nV_ny_n-x_{n+1}-\lambda_n\beta_n\xi_{n+1} \in \lambda_n \opA x_{n+1}.$$ 
We also have $\lambda_{n} v \in\lambda_{n}\opA u$. The monotonicity of $\opA$ yields
$$\inner{x_{n+1}-u,y_n-x_{n+1}-\lambda_nV_ny_n-\lambda_n\beta_n\xi_{n+1}-\lambda_nv}\ge 0,
$$
which we rewrite as
\begin{equation}\label{eq:wi1}
2\inner{x_{n+1}-u,x_{n+1}-y_n}\leq 2\lambda_n\inner{u-x_{n+1},V_ny_n+v} + 2\lambda_n\beta_n\inner{u-x_{n+1},\xi_{n+1}}.
\end{equation}
Substituting $y_n  =  x_n+\alpha_n(x_n-x_{n-1})$, the left-hand side becomes
\begin{align*}
2\inner{x_{n+1}-u,x_{n+1}-y_n}&=2\inner{x_{n+1}-u,x_{n+1}-x_n-\alpha_n(x_n-x_{n-1})}\\
&=2\inner{x_{n+1}-u,x_{n+1}-x_n}  -2\alpha_n\inner{x_n-u,x_n-x_{n-1}} -2\alpha_n\inner{x_{n+1}-x_n,x_n-x_{n-1}} \\
& \ge  \norm{x_{n+1}-u}^2+\norm{x_{n+1}-x_n}^2-\norm{u-x_n}^2 - \alpha_n\norm{u-x_n}^2 - \alpha_n\norm{x_n-x_{n-1}}^2\\
&\quad +\alpha_n\norm{u-x_{n-1}}^2 -\alpha_n\norm{x_n-x_{n+1}}^2-\alpha_n\norm{x_n-x_{n-1}}^2,
\end{align*}
Combining this with \eqref{eq:wi1}, we obtain
\begin{align} \label{eq:wi4}
    \norm{x_{n+1}-u}^2-\norm{u-x_n}^2 \le &\ \alpha_n\norm{u-x_n}^2-\alpha_n\norm{u-x_{n-1}}^2+(\alpha_n-1)\norm{x_n-x_{n+1}}^2 + 2\alpha_n\norm{x_n-x_{n-1}}^2 \nonumber \\
    &  +2\lambda_n\inner{u-x_{n+1},V_ny_n+v}  +2\lambda_n\beta_n\inner{u-x_{n+1},\xi_{n+1}}.
\end{align}
For the second-to-last term on the right-hand side, we write 
$$u-x_{n+1}=u-y_n+y_n-x_n+x_n-x_{n+1}=u-y_n+\alpha_n(x_n-x_{n-1})+x_n-x_{n+1},$$
and then use the strong monotonicity of $\opD_{\eps_n}$, and Young's inequality to deduce that
\begin{equation}\label{eq:wi5}
\begin{split}
2\lambda_n\inner{u-x_{n+1},V_ny_n+v} 
& = 2\lambda_n\inner{u-y_n+\alpha_n(x_n-x_{n-1})+x_n-x_{n+1},\opD_{\eps_{n}}y_n+\beta_n\opB_1 y_n+v} \\
& = 2\lambda_n\inner{u-y_n,\opD_{\eps_{n}}y_n-\opD_{\eps_{n}}u+\opD_{\eps_{n}}u+\beta_n\opB_1 y_n+v} \\
&\quad + 2\lambda_n\inner{\alpha_n(x_n-x_{n-1})+x_n-x_{n+1},\opD_{\eps_{n}}y_n+\beta_n\opB_1 y_n+v} \\
& \le -2\lambda_n\eps_n\|y_n-u\|^2 + 2\lambda_n\inner{u-y_n,v+\opD u+\eps_nu} \\
& \quad + 2\lambda_n\beta_n\inner{u-y_n,\opB_1 y_n} + \gamma_1\|x_n-x_{n-1}\|^2 + \gamma_1\|x_{n+1}-x_n\|^2 \\
& \quad + \frac{\lambda_n^2(1+\alpha_n^2)}{\gamma_1}\|\opD_{\eps_{n}}y_n+\beta_n\opB_1 y_n+v\|^2, 
\end{split}
\end{equation}
for every $\gamma_1>0$. But
\begin{equation*}
\begin{split}
\|\opD_{\eps_{n}}y_n+\beta_n\opB_1 y_n+v\|^2 & = \|\opD_{\eps_{n}}y_n-\opD_{\eps_{n}}u+\opD_{\eps_{n}}u+\beta_n\opB_1 y_n+v\|^2 \\
& \le 2 \|\opD_{\eps_{n}}y_n-\opD_{\eps_{n}}u\|^2+2 \|\opD_{\eps_{n}}u+v\|^2+2\beta_n^2\|\opB_1 y_n\|^2 \\ 
& \le 2(L+\eps_n)^2\|y_n-u\|^2+2 \|\opD_{\eps_{n}}u+v\|^2+2\beta_n^2\|\opB_1 y_n\|^2.
\end{split}
\end{equation*} 
To conclude, we substitute this in \eqref{eq:wi5} and then in \eqref{eq:wi4}. 
\end{proof}

Lemma \ref{lem:discrete} gives us insight into the conditions on the parameters that will later ensure the convergence of Algorithm \eqref{E:Algorithm4}. First, let $(\alpha_n)_{n}$ be non-increasing and bounded from above by some $\alpha>0$. It follows that

\begin{align*}
\norm{x_{n+1}-u}^2-\norm{u-x_n}^2 & \le \alpha_n\norm{u-x_n}^2-\alpha_{n-1}\norm{u-x_{n-1}}^2+(\alpha+\gamma_1-1)\norm{x_n-x_{n+1}}^2 \\
& \quad  +(2\alpha+\gamma_1)\norm{x_n-x_{n-1}}^2 +\frac{2\lambda_n^2(1+\alpha_n)}
{\gamma_1}\|\opD_{\eps_{n}}u+v\|^2 \\
& \quad + \frac{2\lambda_n^2\beta_n^2(1+\alpha^2)}
{\gamma_1}\|\opB_1 y_n\|^2  +2\lambda_n\left[\frac{\lambda_n(1+\alpha^2)(L+\eps_n)^2}
{\gamma_1}-\eps_n\right]\|y_n-u\|^2 \\
& \quad + 2\lambda_n\beta_n\inner{u-y_n,\opB_1 y_n} +2\lambda_n\beta_n\inner{u-x_{n+1},\xi_{n+1}}  \\
& \quad  + 2\lambda_n\inner{u-y_n,w-p+\eps_nu}.
\end{align*}

Next, we would like the {\it velocity} terms to telescope. To this end, we choose $\gamma_1>0$ such that 
$$\alpha+\gamma_1-1\le -(2\alpha+\gamma_1)-\nu_1,$$
for some $\nu_1>0$. This is equivalent to $\gamma_1\le \frac{1-3\alpha-\nu_1}{2},$ which is possible if $\alpha<\frac{1}{3}$. We choose $\gamma_1$ with equality to make it as large as possible, and $\nu_1$ close to zero, but anyway less than $1-3\alpha$. 
Then, we set
$$c_\alpha:=\frac{\gamma_1}{1+\alpha^2}\qbox{and}d_\alpha:=2\alpha+\gamma_1=\frac{1+\alpha-\nu_1}{2}.$$
Introducing the notation
\begin{equation} \label{eq:rho_n}
\rho_n(u,d):=\norm{x_n-u}^2-\alpha_{n-1}\norm{u-x_{n-1}}^2+d\norm{x_n-x_{n-1}}^2,
\end{equation}
with $d\in\R$, we obtain
\begin{equation}\label{eq:discrete2}
\begin{split}
\rho_{n+1}(u,d_\alpha) &\leq \rho_n(u,d_\alpha) - \nu_1\norm{x_n-x_{n+1}}^2 + \frac{2\lambda_n^2}
{c_\alpha}\|\opD_{\eps_{n}}u+v\|^2 \\
& \quad +2\lambda_n\left[\frac{\lambda_n(L+\eps_n)^2}
{c_\alpha}-\eps_n\right]\|y_n-u\|^2 + \frac{2\lambda_n^2\beta_n^2}
{c_\alpha}\|\opB_1 y_n\|^2  \\
& \quad + 2\lambda_n\beta_n\inner{u-y_n,\opB_1 y_n} +2\lambda_n\beta_n\inner{u-x_{n+1},\xi_{n+1}}   + 2\lambda_n\inner{u-y_n,w-p+\eps_nu}.
\end{split}
\end{equation}
If, moreover,
$$\lambda_n\le\left[\frac{c_\alpha}{2(L+\eps_n)^2}\right]\eps_n,\qbox{where}c_\alpha=\frac{\gamma_1}{1+\alpha^2}<\frac{1-3\alpha}{2(1+\alpha^2)},$$ 
then
\begin{equation}\label{eq:discrete3}
\begin{split}
\rho_{n+1}(u,d_\alpha) &\leq \rho_n(u,d_\alpha) - \nu_1\norm{x_n-x_{n+1}}^2 - \lambda_n\eps_n\|y_n-u\|^2
+ \frac{2\lambda_n^2}
{c_\alpha}\|\opD_{\eps_{n}}u+v\|^2  + \frac{2\lambda_n^2\beta_n^2}
{c_\alpha}\|\opB_1 y_n\|^2  \\
& \quad + 2\lambda_n\beta_n\inner{u-y_n,\opB_1 y_n} +2\lambda_n\beta_n\inner{u-x_{n+1},\xi_{n+1}}   + 2\lambda_n\inner{u-y_n,w-p+\eps_nu}.
\end{split}
\end{equation}

Concerning the way the set $\scrC$ is approximated, we shall provide convergence results in three particular cases: (1) $\opB_1=0$ and $\opB_2$ is maximally monotone, (2) $\opB_1$ is cocoercive and $\opB_2=0$, and (3) $\opB_1=\nabla \Psi_1$ and $\opB_2=\partial\Psi_2$, where $\Psi_1$ is Lipschitz smooth and convex, and $\Psi_2\in\Gamma_{0}(\scrH)$.

In each case, we give bounds for the expression
$$\delta_n:=2\lambda_n\beta_n\inner{u-y_n,\opB_1 y_n} +2\lambda_n\beta_n\inner{u-x_{n+1},\xi_{n+1}}  - 2\lambda_n\inner{u-y_n,p}$$
on the right-hand side of \eqref{eq:discrete3}.\\

\noindent{\bf Case 1.} If $\opB_1\equiv 0$, then
    \begin{align*}
        \delta_n & = 2\lambda_n\beta_n\inner{u-x_{n+1},\xi_{n+1}}  - 2\lambda_n\inner{u-y_n,p} \\
        & = 2\lambda_n\beta_n\inner{u-x_{n+1},\xi_{n+1}} - 2\lambda_n\inner{u-x_{n+1},p} - 2\lambda_n\inner{x_{n+1}-y_n,p}  \\
        & = 2\lambda_n\beta_n\bigg[\inner{u-x_{n+1},\xi_{n+1}}+\inner{x_{n+1},\frac{p}{\beta_n}}-\inner{u,\frac{p}{\beta_n}}\bigg] - 2\lambda_n\inner{x_{n+1}-y_n,p} \\
        & \le 2\lambda_n\beta_n\left[\varphi_{\opB_2}\bigg(u,\frac{p}{\beta_n}\bigg)-\sigma_\scrC\bigg(\frac{p}{\beta_n}\bigg)\right] + \frac{2\lambda_n^2\|p\|^2}{\nu_1} 
        + \frac{\nu_1}{2}\|x_{n+1}-x_n\|^2 +\frac{\alpha^2\nu_1}{2}\|x_n-x_{n-1}\|^2. 
    \end{align*}
    Inequality \eqref{eq:discrete3} becomes
    \begin{align*} \rho_{n+1}(u,d_\alpha) &\leq \rho_n(u,d_\alpha) - \frac{\nu_1}{2}\norm{x_n-x_{n+1}}^2 +\frac{\alpha^2\nu_1}{2}\|x_n-x_{n-1}\|^2 -\lambda_n\eps_n\|y_n-u\|^2 + \frac{2\lambda_n^2}{c_\alpha} \|\opD_{\eps_{n}}u+v\|^2  \nonumber  \\
    & \quad  + \frac{2\lambda_n^2\|p\|^2}{\nu_1}  + 2\lambda_n\beta_n\left[\varphi_{\opB_2}\bigg(u,\frac{p}{\beta_n}\bigg)-\sigma_\scrC\bigg(\frac{p}{\beta_n}\bigg)\right] + 2\lambda_n\inner{u-y_n,w+\eps_nu} ,
    \end{align*}
    which we rewrite as
    \begin{align} \label{eq:discrete3_case1}
    \rho_{n+1}(u,d_\alpha') &\leq \rho_n(u,d_\alpha') - \frac{\nu_1(1-\alpha^2)}{2}\norm{x_n-x_{n+1}}^2 -\lambda_n\eps_n\|y_n-u\|^2 + 2\lambda_n^2\left[\frac{\|\opD_{\eps_{n}}u+v\|^2}{c_\alpha} +\frac{\|p\|^2}{\nu_1}\right]  \nonumber  \\
    & \quad  + 2\lambda_n\beta_n\left[\varphi_{\opB_2}\bigg(u,\frac{p}{\beta_n}\bigg)-\sigma_\scrC\bigg(\frac{p}{\beta_n}\bigg)\right] + 2\lambda_n\inner{u-y_n,w+\eps_nu},  
    \end{align}
    where $d_\alpha':=d_\alpha+\frac{\alpha^2\nu_1}{2}$. \\
\noindent{\bf Case 2.} If $\opB_2\equiv 0$, we take $\gamma_2\in[0,1]$, and write
    \begin{align*}
        \delta_n & = 2\gamma_2\lambda_n\beta_n\inner{u-y_n,\opB_1 y_n} - 2\lambda_n\inner{u-y_n,p} +2(1-\gamma_2)\lambda_n\beta_n\inner{u-y_n,\opB_1 y_n}\\
        & = 2\gamma_2\lambda_n\beta_n\bigg[\inner{u-y_n,\opB_1 y_n}+\inner{y_n,\frac{p}{\gamma_2\beta_n}}-\inner{u,\frac{p}{\gamma_2\beta_n}}\bigg] -2\mu(1-\gamma_2)\lambda_n\beta_n\|\opB_1y_n\|^2 \\
        & \le 2\lambda_n\beta_n\left[\varphi_{\opB_1}\bigg(u,\frac{p}{\gamma_2\beta_n}\bigg)-\sigma_\scrC\bigg(\frac{p}{\gamma_2\beta_n}\bigg)\right]
         -2\mu(1-\gamma_2)\lambda_n\beta_n\|\opB_1y_n\|^2.
    \end{align*}
    In this case, \eqref{eq:discrete3} becomes
    \begin{align}\label{eq:discrete3_case2}
    \rho_{n+1}(u,d_\alpha) &\leq \rho_n(u,d_\alpha) - \nu_1\norm{x_n-x_{n+1}}^2 -\lambda_n\eps_n\|y_n-u\|^2
    + \frac{2\lambda_n^2}
    {c_\alpha}\|\opD_{\eps_{n}}u+v\|^2   \nonumber   \\
    & \quad + 2\lambda_n\beta_n\left[\frac{\lambda_n\beta_n}
    {c_\alpha}-\mu(1-\gamma_2)\right]\|\opB_1y_n\|^2 \nonumber \\
    & \quad + 2\lambda_n\beta_n\left[\varphi_{\opB_1}\bigg(u,\frac{p}{\gamma_2\beta_n}\bigg)-\sigma_\scrC\bigg(\frac{p}{\gamma_2\beta_n}\bigg)\right]  + 2\lambda_n\inner{u-y_n,w+\eps_nu}.
    \end{align}
\noindent{\bf Case 3.} If $\opB_1=\nabla \Psi_1$ and $\opB_2=\partial\Psi_2$, we have
    \begin{align*}
        \delta_n & = 2\lambda_n\beta_n\inner{u-y_n,\nabla \Psi_1(y_n)} + 2\lambda_n\beta_n\inner{u-x_{n+1},\xi_{n+1}}  - 2\lambda_n\inner{u-y_n,p} \\
        & = 2\gamma_2\lambda_n\beta_n\inner{u-y_n,\nabla \Psi_1(y_n)} + 2(1-\gamma_2)\lambda_n\beta_n\inner{u-y_n,\nabla \Psi_1(y_n)} \\
        & \quad + 2\lambda_n\beta_n\inner{u-x_{n+1},\xi_{n+1}}  - 2\lambda_n\inner{u-y_n,p}.
    \end{align*}
    Using Lemma \ref{L:transportation_formula} with $c=\mu$ on the first term on the right-hand side, we get
    \begin{align*}
        \delta_n & \le - 2\gamma_2\lambda_n\beta_n\Psi_1(x_{n+1})  - 2\lambda_n\beta_n\Psi_2(x_{n+1}) -2\lambda_n\inner{u-x_{n+1},p} - 2\lambda_n\inner{x_{n+1}-y_n,p}\\
        & \quad -2\mu\lambda_n\beta_n(1-\gamma_2)\|\nabla \Psi_1(y_n)\|^2 +\frac{2\gamma_2\lambda_n\beta_n}{\mu}\|x_{n+1}-y_n\|^2 \\
        & \le 2\gamma_2\lambda_n\beta_n\left[\langle x_{n+1},\frac{p}{\gamma_2\beta_n}\rangle - \big(\Psi_1+\Psi_2\big)(x_{n+1}) -\langle u,\frac{p}{\gamma_2\beta_n}\rangle\right] - 2(1-\gamma_2)\lambda_n\beta_n\Psi_2(x_{n+1})\\
        & \quad -2\mu\lambda_n\beta_n(1-\gamma_2)\|\nabla \Psi_1(y_n)\|^2 +\frac{2\gamma_2\lambda_n\beta_n}{\mu}\|x_{n+1}-y_n\|^2 + \frac{8\lambda_
        n^2\|p\|^2}{\nu_1} 
        + \frac{\nu_1}{8}\|x_{n+1}-y_n\|^2 \\
        & \le 2\gamma_2\lambda_n\beta_n\left[\big(\Psi_1+\Psi_2\big)^*(\frac{p}{\gamma_2\beta_n}) -\sigma_C(\frac{p}{\gamma_2\beta_n})\right] +\left[\frac{2\gamma_2\lambda_n\beta_n}{\mu}+\frac{\nu_1}{8}\right]\|x_{n+1}-y_n\|^2 + \frac{8\lambda_
        n^2\|p\|^2}{\nu_1} \\
        & \quad -2\mu(1-\gamma_2)\lambda_n\beta_n\|\nabla \Psi_1(y_n)\|^2-2(1-\gamma_2)\lambda_n\beta_n\Psi_2(x_{n+1}).
    \end{align*}
    Using this in \eqref{eq:discrete3}, we obtain
    \begin{align}\label{eq:discrete3_case3}
    \rho_{n+1}(u,d_\alpha) &\leq \rho_n(u,d_\alpha) - \nu_1\norm{x_n-x_{n+1}}^2 -\lambda_n\eps_n\|y_n-u\|^2
     \nonumber \\
    & \quad  + 2\lambda_n\beta_n\left[\frac{\lambda_n\beta_n}
    {c_\alpha}-\mu(1-\gamma_2)\right]\|\nabla \Psi_1(y_n)\|^2 -2(1-\gamma_2)\lambda_n\beta_n\Psi_2(x_{n+1})  \nonumber \\
    & \quad 
    + 2\lambda_n^2\left[\frac{\|\opD_{\eps_{n}}u+v\|^2}
    {c_\alpha} + \frac{4\|p\|^2}{\nu_1} \right] +\left[\frac{2\gamma_2\lambda_n\beta_n}{\mu}+\frac{\nu_1}{8}\right]\|x_{n+1}-y_n\|^2  \nonumber \\
    & \quad + 2\gamma_2\lambda_n\beta_n\left[\big(\Psi_1+\Psi_2\big)^*(\frac{p}{\gamma_2\beta_n}) -\sigma_\scrC(\frac{p}{\gamma_2\beta_n})\right] + 2\lambda_n\inner{u-y_n,w+\eps_nu}.
    \end{align}

The preceding discussion motivates the following: 

\begin{assumption} \label{A:standing_parameters}
$(\alpha_n)$ is nonincreasing and bounded from above by some $\alpha>0$. Additionally, the following bounds are satisfied:
$$\sup_{n\ge 0}\lambda_n\beta_n<c_\alpha\mu\footnote{If $\opB_1\equiv 0$, we interpret $\mu=\infty$.},\qquad \lambda_n\le\left[\frac{c_\alpha}{2(L+\eps_n)^2}\right]\eps_n,\qbox{where}c_\alpha<\frac{1-3\alpha}{2(1+\alpha^2)},$$ 
\end{assumption}

We set 
\begin{equation} \label{eq:kappa_uvp}
    \kappa(u,v,p):=2\sup_{n\ge 0}\left[\frac{\|\opD_{\eps_{n}}u+v\|^2}{c_\alpha} + \frac{4\|p\|^2}{\nu_1} \right],
\end{equation}
and note that 
$$\kappa(u,v,p)
\le \left[\frac{2\|\opD u+v\|^2}{c_\alpha} + \frac{2\|u\|^2}{c_\alpha}\sup_{n\ge 0}\eps_n^2 + \frac{4\|p\|^2}{\nu_1} \right]<\infty,$$
because $\eps_n\to 0$ as $n\to\infty$. 

\begin{proposition} \label{P:abstract_estimation}
Let $(x_n)_{n\geq 0}$ be the sequence generated by Algorithm \eqref{E:Algorithm4}, and let Assumption \ref{A:standing_parameters} hold. Consider a point $(u,w) \in \gr(\opA+\opD+\NC_{\scrC})$, so that $w=v+\opD u+p$ for some $v\in \opA u$ and $p\in \NC_{\scrC}(u)$. For every $n\geq 1$, we have
\begin{equation} \label{eq:discrete_all}
    \rho_{n+1}(u,d) + A_n + \lambda_n\eps_n\|y_n-u\|^2 \leq \rho_n(u,d) + \Delta_n(u,p) + \kappa(u,v,p)\lambda_n^2 + 2\lambda_n\inner{u-y_n,w+\eps_nu},
\end{equation}
where $A_n\ge \frac{4\nu_1}{9}\norm{x_n-x_{n+1}}^2$. The coefficients $\Delta_n(u,p)\ge 0$ and $d>0$ have concrete expressions for the cases 1-3. 
\end{proposition}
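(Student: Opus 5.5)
The plan is to read off \eqref{eq:discrete_all} separately from the three case inequalities \eqref{eq:discrete3_case1}, \eqref{eq:discrete3_case2} and \eqref{eq:discrete3_case3}. In each case we choose $d$, $A_n$ and $\Delta_n(u,p)$ explicitly, absorb the $\lambda_n^2$-terms into $\kappa(u,v,p)\lambda_n^2$ using \eqref{eq:kappa_uvp}, and discard the terms with a favourable sign. Two uniform ingredients are needed. First, since $\alpha<\frac13$ (implicit in Assumption \ref{A:standing_parameters}, because $c_\alpha>0$), we have $\frac{1-\alpha^2}{2}\ge\frac{4}{9}$. Second, the Fitzpatrick and conjugate brackets are nonnegative. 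Indeed, if $\opM$ is monotone with $\scrC\subseteq\zer(\opM)$, then $(y,0)\in\gr(\opM)$ for every $y\in\scrC$, so
$$\varphi_{\opM}(u,q)\ge\sup_{y\in\scrC}\inner{y,q}=\sigma_{\scrC}(q)\qquad\text{for all }q\in\scrH.$$
Likewise, if $\Psi_1+\Psi_2$ vanishes on $\scrC$, then $(\Psi_1+\Psi_2)^*(q)\ge\sup_{y\in\scrC}\inner{y,q}=\sigma_\scrC(q)$. For Case 3 I will use that $\min\Psi_i=0$ with $\scrC=\zer(\nabla\Psi_1)\cap\zer(\partial\Psi_2)$, so $\Psi_1,\Psi_2\ge0$ and both vanish on $\scrC$.

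\textbf{Case 1.} Take $d=d_\alpha'$, $A_n=\frac{\nu_1(1-\alpha^2)}{2}\norm{x_n-x_{n+1}}^2\ge\frac{4\nu_1}{9}\norm{x_n-x_{n+1}}^2$ and
$$\Delta_n(u,p)=2\lambda_n\beta_n\Big[\varphi_{\opB_2}\big(u,\tfrac{p}{\beta_n}\big)-\sigma_\scrC\big(\tfrac{p}{\beta_n}\big)\Big]\ge0.$$
The term $2\lambda_n^2\big[\|\opD_{\eps_n}u+v\|^2/c_\alpha+\|p\|^2/\nu_1\big]$ is bounded by $\kappa(u,v,p)\lambda_n^2$, and \eqref{eq:discrete3_case1} is then exactly \eqref{eq:discrete_all}.

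\textbf{Case 2.} Since $\sup_n\lambda_n\beta_n<c_\alpha\mu$, there is $\gamma_2\in(0,1)$ with $\lambda_n\beta_n/c_\alpha\le\mu(1-\gamma_2)$ for all $n$. With this choice the $\|\opB_1y_n\|^2$ term in \eqref{eq:discrete3_case2} is nonpositive and is dropped. We then take $d=d_\alpha$, $A_n=\nu_1\norm{x_n-x_{n+1}}^2$, and
$$\Delta_n(u,p)=2\lambda_n\beta_n\Big[\varphi_{\opB_1}\big(u,\tfrac{p}{\gamma_2\beta_n}\big)-\sigma_\scrC\big(\tfrac{p}{\gamma_2\beta_n}\big)\Big]\ge0.$$

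\textbf{Case 3.} This is the main obstacle, because of the extra term $\big[\frac{2\gamma_2\lambda_n\beta_n}{\mu}+\frac{\nu_1}{8}\big]\|x_{n+1}-y_n\|^2$. I would choose $\gamma_2$ small enough that two conditions hold simultaneously. The first is $\lambda_n\beta_n/c_\alpha\le\mu(1-\gamma_2)$, which kills the $\|\nabla\Psi_1(y_n)\|^2$ term as in Case 2. The second is $2\gamma_2\lambda_n\beta_n/\mu\le 2\gamma_2c_\alpha\le\nu_1/8$, which makes the bracket at most $\nu_1/4$.

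Next, $\|x_{n+1}-y_n\|^2\le2\|x_{n+1}-x_n\|^2+2\alpha^2\|x_n-x_{n-1}\|^2$, so this term costs at most $\frac{\nu_1}{2}\|x_{n+1}-x_n\|^2+\frac{\alpha^2\nu_1}{2}\|x_n-x_{n-1}\|^2$. Moving the second piece into the Lyapunov function as in Case 1, set $d=d_\alpha+\frac{\alpha^2\nu_1}{2}$. Then
$$A_n=\frac{\nu_1(1-\alpha^2)}{2}\norm{x_n-x_{n+1}}^2+2(1-\gamma_2)\lambda_n\beta_n\Psi_2(x_{n+1})\ge\frac{4\nu_1}{9}\norm{x_n-x_{n+1}}^2,$$
using $\Psi_2\ge0$. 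Finally,
$$\Delta_n(u,p)=2\gamma_2\lambda_n\beta_n\Big[(\Psi_1+\Psi_2)^*\big(\tfrac{p}{\gamma_2\beta_n}\big)-\sigma_\scrC\big(\tfrac{p}{\gamma_2\beta_n}\big)\Big]\ge0,$$
and the $\lambda_n^2$-term matches $\kappa(u,v,p)\lambda_n^2$ by definition. The remaining bookkeeping is to verify that the $\|x_n-x_{n-1}\|^2$ coefficients telescope correctly in the redefined $\rho_n(u,d)$, exactly as was done in passing from $d_\alpha$ to $d_\alpha'$ in Case 1.
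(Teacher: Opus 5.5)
Your proposal is correct and takes essentially the same route as the paper. It reads off \eqref{eq:discrete_all} case by case from \eqref{eq:discrete3_case1}--\eqref{eq:discrete3_case3}, with the same small $\gamma_2$ (satisfying $\lambda_n\beta_n/c_\alpha\le\mu(1-\gamma_2)$ and $2\gamma_2c_\alpha\le\nu_1/8$), the same splitting of $\|x_{n+1}-y_n\|^2$, and the same shift to $d=d_\alpha'$; the only differences are cosmetic: you drop the nonpositive $\|\opB_1y_n\|^2$ and $\|\nabla\Psi_1(y_n)\|^2$ terms instead of keeping them in $A_n$ with weight $\nu_2$, and you make explicit two facts the paper uses tacitly, namely $\Delta_n\ge0$ (via $\varphi_{\opM}(u,\cdot)\ge\sigma_\scrC$ and the conjugate bound) and the normalization $\min\Psi_i=0$.
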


\begin{proof}
Since $\sup_{n\ge 0}\lambda_n\beta_n<c_\alpha\mu$, we can choose $\gamma_2\in(0,1]$ such that
$$\frac{\sup_{n\ge 0}\lambda_n\beta_n}{c_\alpha}<\mu(1-\gamma_2).$$
By further reducing $\gamma_2$, if necessary, we may assume that $\gamma_2 \le \frac{\nu_1}{16c_\alpha}$, which implies
$$\frac{2\gamma_2\lambda_n\beta_n}{\mu}\le 2\gamma_2c_\alpha \le \frac{\nu_1}{8},$$
for every $n\ge 0$. Defining
$$\nu_2:=2\left[\mu(1-\gamma_{2})-\frac{\sup_{n\ge 0}\lambda_n\beta_n}{c_\alpha}\right]>0,$$
we get
$$\frac{\lambda_n\beta_n}{c_\alpha}-\mu(1-\gamma_2)\le-\frac{\nu_2}{2}$$
for every $n\ge 0$.
\\

\noindent{\bf Case 1.} If $\opB_1\equiv 0$, inequality \eqref{eq:discrete3_case1} implies that
\begin{align} \label{eq:discrete3_case1'}
\rho_{n+1}(u,d_\alpha') &\leq \rho_n(u,d_\alpha') - \frac{4\nu_1}{9}\norm{x_n-x_{n+1}}^2 -\lambda_n\eps_n\|y_n-u\|^2 + \kappa(u,v,p)\lambda_n^2 \nonumber  \\
& \quad  + 2\lambda_n\beta_n\left[\varphi_{\opB_2}\bigg(u,\frac{p}{\beta_n}\bigg)-\sigma_\scrC\bigg(\frac{p}{\beta_n}\bigg)\right] + 2\lambda_n\inner{u-y_n,w+\eps_nu}.
\end{align}
Setting $A_n=\frac{4\nu_1}{9}\norm{x_n-x_{n+1}}^2$, $\Delta_n(u,p)=2\lambda_n\beta_n\left[\varphi_{\opB_2}\bigg(u,\frac{p}{\beta_n}\bigg)-\sigma_\scrC\bigg(\frac{p}{\beta_n}\bigg)\right]$ and $d=d_\alpha'$, we write \eqref{eq:discrete3_case1'} in the form of \eqref{eq:discrete_all}. \\
\noindent{\bf Case 2.} If $\opB_2\equiv 0$, \eqref{eq:discrete3_case2} gives
\begin{align} \label{eq:discrete3_case2'}
\rho_{n+1}(u,d_\alpha) &\leq \rho_n(u,d_\alpha) - \nu_1\norm{x_n-x_{n+1}}^2 -\lambda_n\eps_n\|y_n-u\|^2 - \nu_2\lambda_n\beta_n\|\opB_1y_n\|^2+\kappa(u,v,p)\lambda_n^2 \nonumber \\
& \quad + 2\lambda_n\beta_n\left[\varphi_{\opB_1}\bigg(u,\frac{p}{\gamma_2\beta_n}\bigg)-\sigma_\scrC\bigg(\frac{p}{\gamma_2\beta_n}\bigg)\right]  + 2\lambda_n\inner{u-y_n,w+\eps_nu}.
\end{align}
We write \eqref{eq:discrete3_case2'} in the form of \eqref{eq:discrete_all} by setting $A_n=\nu_1\norm{x_n-x_{n+1}}^2+\nu_2\lambda_n\beta_n\|\opB_1y_n\|^2$, $\Delta_n(u,p)=2\lambda_n\beta_n\left[\varphi_{\opB_1}\bigg(u,\frac{p}{\gamma_2\beta_n}\bigg)-\sigma_\scrC\bigg(\frac{p}{\gamma_2\beta_n}\bigg)\right]$ and $d=d_\alpha$. \\
\noindent{\bf Case 3.} If $\opB_1=\nabla \Psi_1$ and $\opB_2=\partial\Psi_2$, we deduce from \eqref{eq:discrete3_case3}, that
\begin{align*}
\rho_{n+1}(u,d_\alpha) &\leq \rho_n(u,d_\alpha) - \nu_1\norm{x_n-x_{n+1}}^2 - \lambda_n\eps_n\|y_n-u\|^2 -\nu_2\lambda_n\beta_n\|\nabla\Psi_1(y_n)\|^2 + \kappa(u,v,p)\lambda_n^2 \nonumber \\
& \quad -2(1-\gamma_2) \lambda_n\beta_n\Psi_2(x_{n+1})  + \frac{\nu_1}{4}\|x_{n+1}-y_n\|^2  \nonumber \\
& \quad + 2\gamma_2\lambda_n\beta_n\left[\big(\Psi_1+\Psi_2\big)^*(\frac{p}{\gamma_2\beta_n}) -\sigma_\scrC(\frac{p}{\gamma_2\beta_n})\right] + 2\lambda_n\inner{u-y_n,w+\eps_nu}.
\end{align*}
Since 
$$\frac{\nu_1}{4}\|x_{n+1}-y_n\|^2 \le \frac{\nu_1}{2}\|x_{n+1}-x_n\|^2 + \frac{\nu_1\alpha^2}{2}\|x_n-x_{n-1}\|^2 \le \frac{\nu_1}{2}\|x_{n+1}-x_n\|^2 + \frac{\nu_1}{18}\|x_n-x_{n-1}\|^2,$$
we can regroup the terms to obtain
\begin{align} \label{eq:discrete3_case3'}
\rho_{n+1}(u,d_\alpha') &\leq \rho_n(u,d_\alpha') - \frac{4\nu_1}{9}\norm{x_n-x_{n+1}}^2 -\nu_2\lambda_n\beta_n\|\nabla\Psi_1(y_n)\|^2 -2(1-\gamma_2) \lambda_n\beta_n\Psi_2(x_{n+1}) \nonumber \\
& \quad - \lambda_n\eps_n\|y_n-u\|^2  + \kappa(u,v,p)\lambda_n^2 \nonumber \\
& \quad + 2\gamma_2\lambda_n\beta_n\left[\big(\Psi_1+\Psi_2\big)^*(\frac{p}{\gamma_2\beta_n}) -\sigma_\scrC(\frac{p}{\gamma_2\beta_n})\right] + 2\lambda_n\inner{u-y_n,w+\eps_nu}
\end{align}
Finally, we can set $A_n=\frac{4\nu_1}{9}\norm{x_n-x_{n+1}}^2 +\nu_2\lambda_n\beta_n\|\nabla\Psi_1(y_n)\|^2 + 2(1-\gamma_2) \lambda_n\beta_n\Psi_2(x_{n+1})$, $\Delta_n(u,p)=2\gamma_2\lambda_n\beta_n\left[\big(\Psi_1+\Psi_2\big)^*(\frac{p}{\gamma_2\beta_n}) -\sigma_\scrC(\frac{p}{\gamma_2\beta_n})\right]$ and $d=d_\alpha'$, to write \eqref{eq:discrete3_case3'} in the form of \eqref{eq:discrete_all}. 
\end{proof}

\subsection{Weak ergodic convergence}

%
%


Endowed with the preliminary energetic bounds from the previous section, we can now state and proof the announced weak convergence results. The following assumption will turn out to be crucial. 

\begin{assumption}[Attouch-Czarnecki condition] \label{A:Attouch_Czarnecki} 
The sequence $(\lambda_n\eps_n)_{n}$ belongs to $\ell^1_{+}(\N)$, and one of the following conditions holds:
\begin{itemize}
    \item $\opB_1\equiv 0$ and 
    $\displaystyle\sum_{n\ge 0}\lambda_n\beta_n\left[\varphi_{\opB_2}\bigg(u,\frac{p}{\beta_n}\bigg)-\sigma_\scrC\bigg(\frac{p}{\beta_n}\bigg)\right]<\infty$;
    \item $\opB_2\equiv 0$ and $\displaystyle \sum_{n\ge 0}\lambda_n\beta_n\left[\varphi_{\opB_1}\bigg(u,\frac{p}{\beta_n}\bigg)-\sigma_\scrC\bigg(\frac{p}{\beta_n}\bigg)\right]<\infty$; or
    \item $\opB_1=\nabla \Psi_1$, $\opB_2=\partial \Psi_2$ and 
    $\displaystyle \sum_{n\ge 0}\lambda_n\beta_n\left[(\Psi_1+\Psi_2)^*\bigg(\frac{p}{\beta_n}\bigg)-\sigma_\scrC\bigg(\frac{p}{\beta_n}\bigg)\right]<\infty$.
\end{itemize}
\end{assumption}
\begin{remark}
    Albeit abstract looking, the conditions formulated in Assumption \ref{A:Attouch_Czarnecki} define a useful geometric setting in which iterative regularization methods are powerful. To give a concrete setting the integrability conditions are satisfied, let us focus on the last case in which the penality operators are differential operators of convex functions. Consider the setting $\nabla \Psi:=\opB_{1}:\scrH\to\bar{\R}:=\R\cup\{-\infty,\infty\}$ be the proper convex lower semicontinuous function describing the penalty for the constraint domain $\scrC=\argmin(\Psi)$. The function $\Psi$ satisfies an Hölderian growth condition with exponent $\rho\geq 1$ if for some $\tau>0$ we have 
$$
\frac{\tau}{\rho}\dist(x,\argmin\Psi)^{\rho}\leq \Psi(x)-\min\Psi(x)\qquad\forall x\in\scrH.
$$
The Attouch-Czarnecki condition in that setting is known to be weaker than the Hölderian growth condition, since 
$$
0\leq \Psi^{*}(z)-\sigma_{\scrC}(p)\leq \frac{\tau^{1-\rho^{*}}}{\rho^{*}}\norm{z}^{\rho^{*}}\qquad\forall z\in\scrH, 
$$
where $\rho^{*}=\frac{\rho}{\rho-1}$. Hence, the Attouch-Czarnecki condition will hold automatically, whenever $\sum_{n\geq 0}\lambda\beta_{n}^{1-\rho^{*}}<\infty$. 
\end{remark}

With this remark in mind, we start developing the formal argument. 
\begin{proposition}\label{P:weak_convergence}
Let Assumptions \ref{A:standing_parameters} and \ref{A:Attouch_Czarnecki} hold, and let $(x_n,y_n)_{n\geq 0}$ be generated by \eqref{E:Algorithm4}. Then, the series $\sum_{n}\norm{x_{n+1}-x_{n}}^2$ is convergent, and, for every $u\in \zer(\opA+\opD+\NC_{\scrC})$, the limit $\lim\limits_{n\rightarrow+\infty}\norm{x_n-u}^2$ exists.
\end{proposition}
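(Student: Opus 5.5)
The plan is to apply Proposition \ref{P:abstract_estimation} with $w=0$ and then Lemma \ref{L:real1}. Fix $u\in\scrS=\zer(\opA+\opD+\NC_{\scrC})$. Choose $v\in\opA u$ and $p\in\NC_{\scrC}(u)$ with $0=v+\opD u+p$, and plug $(u,0)$ into \eqref{eq:discrete_all}. The term $2\lambda_n\inner{u-y_n,w+\eps_n u}$ then reduces to $2\lambda_n\eps_n\inner{u-y_n,u}$. By Young's inequality,
\[
2\lambda_n\eps_n\inner{u-y_n,u}\le \lambda_n\eps_n\|y_n-u\|^2+\lambda_n\eps_n\|u\|^2 .
\]
The first summand is absorbed by the term $\lambda_n\eps_n\|y_n-u\|^2$ on the left-hand side of \eqref{eq:discrete_all}. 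This leaves
\[
\rho_{n+1}(u,d)+A_n\le\rho_n(u,d)+\Delta_n(u,p)+\kappa(u,v,p)\lambda_n^2+\lambda_n\eps_n\|u\|^2 .
\]
The perturbation $d_n:=\Delta_n(u,p)+\kappa\lambda_n^2+\lambda_n\eps_n\|u\|^2$ is summable. The term $\kappa\lambda_n^2$ is summable because $\lambda\in\ell^2$ (Assumption \ref{ass:assumptionbasic}). The term $\lambda_n\eps_n\|u\|^2$ is summable because $(\lambda_n\eps_n)\in\ell^1$, and $\Delta_n$ is summable by Assumption \ref{A:Attouch_Czarnecki}.

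One point needs care here. In Cases 2 and 3, $\Delta_n$ involves the rescaled argument $p/(\gamma_2\beta_n)$, while the assumption is stated at $p/\beta_n$. I would handle this by applying the assumption to the normal vector $p/\gamma_2\in\NC_{\scrC}(u)$, which is legitimate since the normal cone is a cone. This assumes the Attouch--Czarnecki condition is read, as is standard, for all $p\in\NC_\scrC(u)$ (equivalently for all $p$ in the range of $\NC_\scrC$). The factor $\gamma_2$ in front of the bracket in Case 3 is harmless.

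Next, I would apply Lemma \ref{L:real1} with
\[
\rho_n=\rho_n(u,d),\quad \omega_n=\|x_n-u\|^2,\quad e_n=A_n,\quad a_n=\alpha_n,\quad b_n=d\|x_{n+1}-x_n\|^2 .
\]
The indices need matching. Definition \eqref{eq:rho_n} gives $\rho_{n+1}=\omega_{n+1}-\alpha_n\omega_n+d\|x_{n+1}-x_n\|^2$, which fits the form required by Lemma \ref{L:real1}. We have $\sup a_n\le\alpha<1/3\le 1$, so part (i) yields that $\lim\rho_n$ exists and that $\sum A_n<\infty$. Since $A_n\ge\frac{4\nu_1}{9}\|x_{n+1}-x_n\|^2$, the series $\sum_n\|x_{n+1}-x_n\|^2$ converges. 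This is the first claim.

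For part (ii), $(\alpha_n)$ is nonincreasing and bounded below by $0$, so it converges to some $a\le\alpha<1$. Summability of $\|x_{n+1}-x_n\|^2$ gives $b_n\to0$, so $\lim b_n$ exists. Lemma \ref{L:real1}(ii) then gives existence of $\lim\|x_n-u\|^2$.

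I expect the main obstacle to be the bookkeeping just described. One issue is checking that the term $\inner{u-y_n,\eps_n u}$ is really controlled by the available $\lambda_n\eps_n\|y_n-u\|^2$ with exactly the coefficient $1$; Young's inequality with equal weights gives precisely this. The other issue is reconciling the $\gamma_2$-rescaling in $\Delta_n$ with the form of Assumption \ref{A:Attouch_Czarnecki}. If the assumption is only meant for a fixed $p$, I would instead use $\gamma_2=1$ in Case 2. That is allowed only when $\sup\lambda_n\beta_n$ is small enough, so I would fall back on the conic invariance argument above.
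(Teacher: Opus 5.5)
Your proposal is correct and is essentially the paper's own argument. Both apply Proposition \ref{P:abstract_estimation} with $w=0$, use Young's inequality to absorb $2\lambda_n\eps_n\inner{u-y_n,u}$ into $\lambda_n\eps_n\norm{y_n-u}^2$, and then invoke Lemma \ref{L:real1} with the same choices of $\rho_n,\omega_n,e_n,a_n,b_n$. There are minor differences: the paper makes $\kappa(u,v,p)\lambda_n^2$ summable through the step-size bound $\lambda_n\le\frac{c_\alpha}{2L^2}\eps_n$, so it only needs $(\lambda_n\eps_n)\in\ell^1_{+}(\N)$ rather than $\lambda\in\ell^2$, and it passes silently over the $p/(\gamma_2\beta_n)$ rescaling that you correctly resolve through the conic structure of $\NC_{\scrC}(u)$. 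Your fallback $\gamma_2=1$ is never admissible, since $\sup_n\lambda_n\beta_n/c_\alpha<\mu(1-\gamma_2)$ then fails, but you rightly do not rely on it.
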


\begin{proof}
Take $u\in \zer(\opA+\opD+\NC_{\scrC})$, and use $w=0$ in Proposition \ref{P:abstract_estimation}. Then use Young's inequality on the last term to deduce that
$$\rho_{n+1}(u,d) + A_n + \lambda_n\eps_n\|y_n-u\|^2 \leq \rho_n(u,d) + \Delta_n(u,p) + \kappa(u,v,p)\lambda_n^2 + 2\lambda_n\eps_n\inner{u-y_n,u},$$
which implies
\begin{align} \label{eq:ell2_estimation}
\rho_{n+1}(u,d) + A_n & \leq \rho_n(u,d) + \Delta_n(u,p) + \kappa(u,v,p)\lambda_n^2 + \lambda_n\eps_n\|u\|^2 \nonumber \\
& \le \rho_n(u,d) + \Delta_n(u,p) + \left[\frac{\kappa(u,v,p) c_\alpha}{2L^2}+\|u\|^2\right]\lambda_n\eps_n.    
\end{align}

We use Lemma \ref{L:real1} with $\rho_n=\rho_n(u,d)$, $\omega_n=\|x_n-u\|^2$, $e_n=A_n$, $d_n=\Delta_n(u,p)+\left[\frac{\kappa(u,v,p) c_\alpha}{2L^2}+\|u\|^2\right]\lambda_n\eps_n$ (which is summable, in view of Assumption \ref{A:Attouch_Czarnecki}), $a_n=\alpha_n$ and $b_n=d\|x_{n+1}-x_n\|^2$. From part i), we conclude that $A_n\in\ell^1_{+}(\N)$, that $\lim_{n\to\infty}b_n=0$ (because $b_n$ can be bounded from above by a multiple of $A_n$),
and that $\rho:=\lim_{n\to\infty} \rho_n(u,d)$ exists. Since $(\alpha_n)$ is non-decreasing and bounded from above by $\frac{1}{3}$, and since $\lim_{n\to\infty}b_n=0$, part ii) shows that $\lim_{n\to\infty}\|x_n-u\|^2$ exists.
\end{proof}

\begin{remark}
By Proposition \ref{P:weak_convergence}, we have $0\le \lim_{n\to\infty}\|y_n-x_n\|\le\alpha\lim_{n\to\infty}\|x_{n+1}-x_n\|=0$. As consequence, the sequences $(x_n)_{n\geq 0}$ and $(y_n)_{n\geq 0}$ have the same weak limit points.
\end{remark}

\begin{theorem}
Let Assumptions \ref{A:standing_parameters} and \ref{A:Attouch_Czarnecki} hold, and let $(\lambda_n)\notin\ell^1_{+}(\N)$. Consider a sequence $(x_n,y_n)_{n\geq 0}$, generated by \eqref{E:Algorithm4}, and define the average sequence $(z_n)_{n\geq 1}$ by
$$z_n:=\frac{1}{\tau_n}\sum\limits^{n}_{k=1}\lambda_{k}y_{k}, \quad \text{where }\tau_{n}:=\sum\limits^{n}_{k=1}\lambda_{k}.$$
If $\opA+\opD+\NC_{\scrC}$ is maximally monotone and $\zer(\opA+\opD+\NC_{\scrC})\neq\emptyset$, then $z_n$ converges weakly, as $n \to \infty$, to a point in $\zer(\opA+\opD+\NC_{\scrC})$.
\end{theorem}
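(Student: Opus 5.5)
We will use the ergodic version of Opial's lemma (Lemma \ref{lem:Opial} applied to the averages $z_n$). The first condition comes essentially for free. By Proposition \ref{P:weak_convergence}, $\lim_n\norm{x_n-u}$ exists for every $u\in\scrS$. By the Remark following it, $\norm{y_n-x_n}\to 0$, so $\lim_n\norm{y_n-u}$ exists too. Since $\tau_n\to\infty$ (because $(\lambda_n)\notin\ell^1$), the weighted averages of the real sequence $\norm{y_k-u}^2$ converge to the same limit. Expanding $\norm{y_k-u}^2=\norm{y_k}^2-2\inner{y_k,u}+\norm{u}^2$, we get that $\lim_n\big(\tau_n^{-1}\sum_k\lambda_k\norm{y_k}^2-2\inner{z_n,u}\big)$ exists for each $u\in\scrS$. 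Moreover $(y_n)$, and hence $(z_n)$, is bounded.

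The standard Opial–Passty argument then shows that if $z$ and $z'$ are two weak cluster points of $(z_n)$ lying in $\scrS$, we have $\inner{z-z',u}$ independent of $u\in\scrS$. Taking $u=z$ and $u=z'$ gives $\norm{z-z'}^2=0$. So everything reduces to showing that every weak cluster point of $(z_n)$ lies in $\scrS$. This replaces condition (i) of Lemma \ref{lem:Opial} by this uniqueness-of-cluster-points argument.

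\textbf{Main step (cluster points are zeros).} Fix an arbitrary $(u,w)\in\gr(\opA+\opD+\NC_{\scrC})$, written $w=v+\opD u+p$ with $v\in\opA u$ and $p\in\NC_{\scrC}(u)$. Apply Proposition \ref{P:abstract_estimation} to this pair, drop the nonnegative terms $A_n$ and $\lambda_n\eps_n\norm{y_n-u}^2$, and rearrange to get
\[
2\lambda_n\inner{y_n-u,w}\le \rho_n(u,d)-\rho_{n+1}(u,d)+\Delta_n(u,p)+\kappa(u,v,p)\lambda_n^2+2\lambda_n\eps_n\inner{u-y_n,u}.
\]
Sum from $k=1$ to $n$ and divide by $2\tau_n$:
\[
\inner{z_n-u,w}\le \frac{1}{2\tau_n}\Big(\rho_1(u,d)-\rho_{n+1}(u,d)+\sum_k\Delta_k(u,p)+\kappa\sum_k\lambda_k^2+2\norm{u}\sup_k\norm{u-y_k}\sum_k\lambda_k\eps_k\Big).
\]
Now control the right-hand side term by term. $\rho_{n+1}(u,d)$ is bounded below, since $\norm{x_n-u}$ is bounded and $\alpha_n\le\alpha<1$; when $u\notin\scrS$ boundedness of $(x_n)$ still holds from the step above. $\sum_k\Delta_k(u,p)<\infty$ by Assumption \ref{A:Attouch_Czarnecki}, applied with the constant $\gamma_2$ absorbed, which presumably the assumption intends. $\sum_k\lambda_k^2<\infty$ by Assumption \ref{ass:assumptionbasic}, and $\sum_k\lambda_k\eps_k<\infty$ by Assumption \ref{A:Attouch_Czarnecki}. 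Since $\tau_n\to\infty$, the right-hand side tends to $0$.

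Hence, for any weak cluster point $\bar z$ of $(z_n)$, we obtain $\inner{\bar z-u,w}\le 0$, that is $\inner{\bar z-u,0-w}\ge 0$, for all $(u,w)$ in the graph of $\Phi$. Maximal monotonicity of $\Phi$ (SH4) then gives $0\in\Phi\bar z$, i.e. $\bar z\in\scrS$.

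\textbf{Expected obstacle.} The delicate point is applying Proposition \ref{P:abstract_estimation} at a point $u$ that is \emph{not} a zero. This needs three things: a lower bound on $\rho_{n+1}(u,d)$; the Attouch–Czarnecki summability at arbitrary $u\in\scrC$ and $p\in\NC_{\scrC}(u)$, which is how the assumption should be read, with the rescaling by $\gamma_2$ handled via positive homogeneity and monotonicity in the argument of the Fitzpatrick/conjugate gap; and boundedness of $(y_n)$. For the lower bound, note $\rho_{n+1}\ge\norm{x_{n+1}-u}^2-\alpha\norm{x_n-u}^2$, and boundedness of $(x_n)$ (from the existence of the limits for some $u^*\in\scrS$) makes this bounded. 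Once these are in place, the rest is routine.
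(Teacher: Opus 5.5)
Your proposal is correct and follows essentially the paper's argument: you sum the estimate of Proposition \ref{P:abstract_estimation} at an arbitrary $(u,w)\in\gr(\Phi)$, divide by $\tau_n\to\infty$ to get $\inner{\bar z-u,w}\le 0$ at any weak cluster point $\bar z$, and conclude $\bar z\in\zer(\Phi)$ by maximal monotonicity. Your explicit Opial--Passty step (deducing that $\lim_n\inner{z_n,u-u'}$ exists from the existence of $\lim_n\norm{x_n-u}$) and your lower bound on $\rho_{n+1}(u,d)$ via boundedness of $(x_n)$ supply details the paper passes over, since it invokes Lemma \ref{lem:Opial} directly for the averages $(z_n)$, whose norm-distance limits Proposition \ref{P:weak_convergence} does not provide.
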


\begin{proof}
From Proposition \ref{P:weak_convergence} and Opial's Lemma (Lemma \ref{lem:Opial}), it suffices to prove that every weak limit point of $(z_n)$ belongs to $\zer(\opA+\opD+\NC_{\scrC})$. Take any $(u,w)\in \gr(\opA+\opD+\NC_{\scrC})$, and assume $z_{N_m}\rightharpoonup \bar z$, as $m\to\infty$. By \eqref{eq:ell2_estimation}, we have
$$\rho_{n+1}(u,d) \le \rho_n(u,d) + \Delta_n + c\lambda_n\eps_n,$$
with $c:=\left[\frac{\kappa(u,v,p) c_\alpha}{2L^2}+\|u\|^2\right]$. Summing for $n=1,\dots,N_m$, and dividing by $\tau_{N_m}$, we obtain
$$0\le \frac{1}{\tau_{N_m}}\rho_{N_m+1}(u,d) \leq \frac{1}{\tau_{N_m}}\left[\rho_1(u,d)+\sum_{n=1}^{N_m}\Delta_n+c\sum_{n=1}^{N_m}\lambda_n\eps_n\right]+2\inner{u-z_{N_m},w}.
$$ 
Letting $m\to\infty$, it ensues that
$$\inner{u-\bar z,w}\ge 0.$$
Since this holds for any $(u,w)\in \gr(\Phi)$, we conclude that $\bar z\in \zer(\Phi)$, by maximality.
\end{proof}





\section{Central path and the strong convergence to the least norm solution}
\label{strongconvergence}

So far, we have established only weak convergence of the ergodic trajectory. In this section we establish strong convergence to the least norm solution of the constrained variational inequality \eqref{eq:MI}. We thereby generalize our previous results \cite{peypouquet2026asymptotic} to inertial forward-backward type schemes with multiscale effects. Our proof relies on a-priori estimates of the central path $u^{\eps}$, defined as the unique solution to the regularized inclusion 
\begin{equation}\label{centralpath}
0\in (\opA+\opD+\NC_{\scrC}+\eps\Id_{\scrH})u^{\eps}\iff -u^{\eps}\in\Phi u^{\eps}. 
\end{equation}
The Tikhonov regularized operator is defined as $\Phi_{\eps}:=\Phi+\eps_n\Id_{\scrH}.$
Clearly, $\{\Phi_{\eps}\}_{\eps>0}$ is a family of strongly monotone set-valued operators, hence $u^{\eps}$ is uniquely defined by \eqref{centralpath}. Note that the penalty operator $\beta_n\opB$ is not absorbed into the central path, which instead encodes the constraint through the normal cone directly. It is instead handled at the algorithmic level via the Fitzpatrick function, whose summability against the parameter sequence $(\lambda_n\beta_n)_{n}$ formulated in a Attouch-Czarnecki hypothesis (cf. Assumption \ref{A:Attouch_Czarnecki_strong}), controls the discrepancy between $\beta_n\opB(\cdot)$ and $\NC_{\scrC}(u^{\eps})$ in the energy inequality. Compared to the double penalization central path discussed in \cite{peypouquet2026asymptotic}, three structural advantages motivate this shift. 
\begin{itemize}
\item[(i)] The path $(u^{\eps_{n}})_{n}$ depends only on the limiting inclusion and the Tikhonov parameter $\eps_n$, and is therefore completely decoupled from the penalty schedule $\beta_n$. In contrast to the double-penalization path $u^{\eps,\beta}$ used in \cite{peypouquet2026asymptotic} as reference, its definition is intrinsic to the limiting problem and does not depend on the penalization parameters.
\item[(ii)] Compared to \cite{peypouquet2026asymptotic}, the regularity analysis is considerably simplified (cf. Section 2 of that paper). The variation of the path depends only on the slow evolution of the Tikhonov parameter $\eps_n$, while the boundedness of the associated normal-cone selections follows from a pointwise qualification condition and the classical local boundedness of maximal monotone operators. This contrasts with the two-parameter approach, where uniform estimates are required along the whole central path together with additional controls on the penalization components.
\item[(iii)] The path converges to the least-norm solution selected by the limiting problem. Moreover, the Attouch–Czarnecki condition acts directly on the penalty operators and simultaneously accounts for the two penalization terms. As a consequence, no additional asymptotic exact-penalty assumption is required.
\end{itemize}

\subsection{Perturbed solutions and the central funnel}


Given a positive sequence $(\eps_{n})_{n\in\mathbb{N}}$ satisfying $\eps_n\to0$, the sequence $(u^{\eps_{n}})_{n\in\mathbb{N}}$ generated by \eqref{centralpath} is called the discrete Tikhonov central path. We shall study its asymptotic properties as $n \to \infty$ and use it as a reference path in the convergence analysis of the proposed algorithm.

\begin{lemma}\label{lem:proj}
$\scrS_{\eps}:=\Phi_{\eps}^{-1}(0)$ is non-empty, closed and convex, hence $u^*=\argmin\{\norm{x}:x\in\scrS\}=\operatorname{Proj}_{\scrS}(0)$ exists, is unique, and is characterized by
\begin{equation}\label{eq:varchar} 
\inner{u^*,z-u^*}\ge 0,\qquad\forall z\in \scrS_{\eps}.
\end{equation}
\end{lemma}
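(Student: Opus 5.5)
The plan is to handle the two sets separately: first $\scrS_{\eps}$, using strong monotonicity together with Minty's theorem, then $\scrS$, using the projection theorem.

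\textbf{The set $\scrS_{\eps}$.} By (SH4), $\Phi$ is maximally monotone. The operator $\eps\Id_{\scrH}$ is single-valued, continuous and monotone with full domain. Hence $\Phi_{\eps}=\Phi+\eps\Id_{\scrH}$ is maximally monotone, by the standard sum rule for a maximally monotone operator plus a continuous monotone operator defined everywhere (cf. \cite{BauCom16}). It is moreover $\eps$-strongly monotone.

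The inclusion $0\in\Phi_{\eps}u$ can be rewritten as
\[
0\in \Bigl(\Id_{\scrH}+\tfrac{1}{\eps}\Phi\Bigr)u .
\]
By Minty's theorem the operator $\Id_{\scrH}+\frac1\eps\Phi$ is surjective, so a solution exists; equivalently, $u=J_{\Phi/\eps}(0)$. Strong monotonicity gives uniqueness: if $u,u'$ are two zeros, then
\[
0\ge \eps\norm{u-u'}^2 ,
\]
so $u=u'$. Therefore $\scrS_{\eps}=\{u^{\eps}\}$ is a singleton, and in particular it is nonempty, closed and convex. This also justifies that the central path \eqref{centralpath} is well defined.

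\textbf{The set $\scrS$.} Nonemptiness of $\scrS=\zer(\Phi)$ is part of (SH4). For closedness and convexity I will use maximality of $\Phi$:
\[
x\in\zer(\Phi)\iff \inner{y-x,v}\ge 0\quad\text{for all }(y,v)\in\gr(\Phi).
\]
The forward implication is just monotonicity applied to the pairs $(x,0)$ and $(y,v)$. The reverse implication holds because adding $(x,0)$ to the graph would produce a monotone extension, which maximality forbids. Thus $\scrS$ is an intersection of closed half-spaces, so it is closed and convex. The projection theorem in the Hilbert space $\scrH$ then yields a unique $u^*=\operatorname{Proj}_{\scrS}(0)$, the least-norm element of $\scrS$. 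It is characterized by the variational inequality
\[
\inner{0-u^*,z-u^*}\le 0\quad\forall z\in\scrS,
\qquad\text{that is}\qquad
\inner{u^*,z-u^*}\ge 0 .
\]

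\textbf{Main obstacle: reading \eqref{eq:varchar}.} The delicate point is the index set in \eqref{eq:varchar}. As a characterization of the projection onto $\scrS$, the inequality must be quantified over $z\in\scrS$, and I would prove it in that form. If it is instead read with $z\in\scrS_{\eps}=\{u^{\eps}\}$, monotonicity only gives the comparison in the opposite direction. Indeed, pairing $-\eps u^{\eps}\in\Phi u^{\eps}$ with $0\in\Phi u^*$ yields
\[
\inner{u^{\eps}-u^*,-\eps u^{\eps}}\ge 0,
\]
hence
\[
\norm{u^{\eps}}^2\le\inner{u^{\eps},u^*}
\qquad\text{and}\qquad
\norm{u^{\eps}}\le\norm{u^*}.
\]
This a priori bound is the useful by-product I would record for the later analysis of the central path. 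I would not claim the inequality with $z=u^{\eps}$, since it does not follow from these estimates.
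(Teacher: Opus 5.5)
Your proof is correct and follows the same route as the paper's: the zero set of a maximally monotone operator is closed and convex, and the projection theorem then gives $u^*$. The difference is that you apply it, as one must, to $\scrS$ rather than to $\scrS_{\eps}$, and you supply the existence of $u^{\eps}$ via Minty's theorem, which the paper's one-line proof asserts without argument. Your caution about \eqref{eq:varchar} is justified and can be sharpened: from $\norm{u^{\eps}}^2\le\inner{u^{\eps},u^*}$ one gets $\inner{u^*,u^{\eps}-u^*}\le-\norm{u^{\eps}-u^*}^2$, so the inequality quantified over $\scrS_{\eps}=\{u^{\eps}\}$ actually fails unless $u^{\eps}=u^*$ (e.g.\ for $\Phi x=x-a$ with $a\neq 0$), and the intended quantifier is $z\in\scrS$.
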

\begin{proof}
Maximal monotonicity of $\Phi_{\eps}$ implies that $\scrS_{\eps}$ is closed and convex. Combined with $\scrS_{\eps}\neq\emptyset$, the Hilbert-space projection theorem yields the existence, uniqueness, and the variational characterization \eqref{eq:varchar}.
\end{proof}

The following lemma collects the classical properties of the Tikhonov regularization, as in \cite{Bot:2020aa}.

\begin{lemma}\label{lem:tikhonov}
For each $\eps>0$, the inclusion $0\in (\Phi+\eps \Id)u$ has a unique solution $u^{\eps}$. Moreover
\begin{itemize}
\item[(i)]  $\norm{u^{\eps}}\le\norm{u^*}$ for every $\eps>0$;
\item[(ii)] $u^{\eps}\to u^*$ strongly as $\eps\to 0^{+}$;
\item[(iii)] for $\eps_1>\eps_2>0$,
\begin{equation}\label{eq:bvtikh}
\norm{u^{\eps_1}-u^{\eps_2}}\le \norm{u^*}\,\frac{\eps_1-\eps_2}{\eps_2}.
\end{equation}
\end{itemize}
\end{lemma}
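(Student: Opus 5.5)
Existence and uniqueness come first. Since $\Phi$ is maximally monotone by (SH4), $\Phi+\eps\Id$ is maximally monotone and $\eps$-strongly monotone. By Minty's theorem, $\Id+\eps^{-1}\Phi$ is onto, so there is $u^{\eps}$ with $0\in(\Phi+\eps\Id)u^{\eps}$. If $u,u'$ were two solutions, pairing $-\eps u\in\Phi u$ and $-\eps u'\in\Phi u'$ and using monotonicity gives $-\eps\norm{u-u'}^2\ge 0$, hence $u=u'$.

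For (i), I will pair the inclusion $-\eps u^{\eps}\in\Phi u^{\eps}$ with $0\in\Phi u^*$ (recall $u^*\in\scrS$). Monotonicity gives $\inner{u^{\eps}-u^*,-\eps u^{\eps}}\ge0$, i.e. $\norm{u^{\eps}}^2\le\inner{u^{\eps},u^*}\le\norm{u^{\eps}}\norm{u^*}$, which yields (i). The same computation holds with $u^*$ replaced by any $z\in\scrS$, so
\[
\norm{u^{\eps}}^2\le\inner{u^{\eps},z}\qquad\forall z\in\scrS .
\]

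For (ii), the main step, I take any sequence $\eps_k\to0$. By (i), $(u^{\eps_k})$ is bounded, so along a subsequence $u^{\eps_k}\wlim\bar u$. The graph of a maximally monotone operator is sequentially closed in the weak$\times$strong topology. Since $-\eps_ku^{\eps_k}\to0$ strongly, this gives $0\in\Phi\bar u$, i.e. $\bar u\in\scrS$. Weak lower semicontinuity of the norm together with (i) gives $\norm{\bar u}\le\liminf\norm{u^{\eps_k}}\le\norm{u^*}$, so $\bar u=u^*$ by uniqueness of the least-norm element (Lemma \ref{lem:proj}). Hence the whole family converges weakly to $u^*$. Moreover $\limsup\norm{u^{\eps}}\le\norm{u^*}\le\liminf\norm{u^{\eps}}$, so the norms converge, and weak convergence plus convergence of norms gives strong convergence in a Hilbert space. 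The weak-strong closedness of the graph is the only nontrivial ingredient, and I would quote it as standard (e.g. from \cite{BauCom16}).

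For (iii), I pair $-\eps_1u^{\eps_1}\in\Phi u^{\eps_1}$ with $-\eps_2u^{\eps_2}\in\Phi u^{\eps_2}$, which gives $\inner{u^{\eps_1}-u^{\eps_2},\eps_2u^{\eps_2}-\eps_1u^{\eps_1}}\ge0$. I write $\eps_2u^{\eps_2}-\eps_1u^{\eps_1}=-\eps_2(u^{\eps_1}-u^{\eps_2})+(\eps_2-\eps_1)u^{\eps_1}$. Then
\[
\eps_2\norm{u^{\eps_1}-u^{\eps_2}}^2\le(\eps_1-\eps_2)\,\inner{u^{\eps_2}-u^{\eps_1},u^{\eps_1}}\le(\eps_1-\eps_2)\norm{u^{\eps_1}-u^{\eps_2}}\norm{u^*},
\]
using Cauchy--Schwarz and (i). Dividing by $\eps_2\norm{u^{\eps_1}-u^{\eps_2}}$ (the case where this norm vanishes is trivial) yields \eqref{eq:bvtikh}.
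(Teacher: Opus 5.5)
Your proof is correct and follows the paper's argument essentially step by step. The same monotonicity pairings give (i) and (iii); in (iii) you isolate $u^{\eps_1}$ where the paper isolates $u^{\eps_2}$, which changes nothing. In (ii) you cite the weak--strong sequential closedness of $\gr\Phi$, whereas the paper re-derives it inline by showing $\inner{w,z-\bar u}\ge 0$ for all $(z,w)\in\gr\Phi$ and then using maximality.
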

\begin{proof}
$\Phi_{\eps}:=\Phi+\eps \Id$ is maximally monotone and $\eps$-strongly monotone, so $u^\eps$ exists and is unique.

\emph{(i)} For any $z\in \scrS$, $0\in \Phi_{\eps} z$ and $-\eps u^\eps\in \Phi u^\eps$. Monotonicity gives
$$
\inner{-\eps u^\eps-0,u^\eps-z}\ge 0
\Longrightarrow \norm{u^\eps}^{2}\le\inner{u^\eps,z}
\le\norm{u^\eps}\,\norm{z},
$$
hence $\norm{u^\eps}\le\norm{z}$. Take $z=u^*$.

\emph{(ii)} By (i) the family is bounded; let $\bar u$ be a weak cluster point along $\eps_n\to 0$. For $(z,w)\in\gr \Phi$,
$$
\inner{-\eps_n u^{\eps_n}-w,u^{\eps_n}-z}\ge 0 \Longrightarrow \inner{-w,u^{\eps_n}-z}\ge \eps_n \inner{u^{\eps_n},u^{\eps_n}-z}.
$$
Letting $n\to\infty$, the right-hand side tends to $0$, so $\inner{w,z-\bar u}\ge 0$ for every $(z,w)\in\gr \Phi$. Maximal monotonicity forces $0\in \Phi \bar u$, i.e.\ $\bar u\in \scrS$. Combined with $\norm{\bar u}\le\liminf\norm{u^{\eps_n}}\le\norm{u^*}$ and the uniqueness of the minimum-norm element, $\bar u=u^*$. The whole net thus converges weakly to $u^*$, and $\limsup\norm{u^\eps}\le\norm{u^*}\le\liminf\norm{u^\eps}$ yields strong convergence.

\emph{(iii)} Apply monotonicity of $\Phi$ to the points $-\eps_1 u_1\in \Phi u_1$ and $-\eps_2 u_2\in \Phi u_2$:
$$
\inner{-\eps_1 u_1+\eps_2 u_2,u_1-u_2}\ge 0
\;\Longrightarrow\;
\eps_2\norm{u_1-u_2}^{2}\le (\eps_1-\eps_2)\inner{u_2,u_2-u_1}\le (\eps_1-\eps_2)\norm{u_2}\cdot\norm{u_1-u_2}.
$$
Combined with $\norm{u_2}\le\norm{u^*}$ this gives \eqref{eq:bvtikh}.
\end{proof}

\subsection{Selection along the central path and boundedness of accompanying normal cone elements}
Let $(\eps_{n})_{n}$ be a positive real-valued sequence with $\eps_{n}\to 0$. Set $u_n:=u^{\eps_n}$. By definition,
\begin{equation}\label{eq:undef}
0\in (\opA+\opD+\eps_n \Id_{\scrH}+\NC_{\scrC})u_n,\qquad
u_n\in\dom \Phi_{\eps}\subset\scrC.
\end{equation}
There exist selections $v_n\in\opA u_n$, $p_n\in\NC_{\scrC}u_n$ with
\begin{equation}\label{eq:undecomp}
0=v_n+\opD u_n+\eps_n u_n+p_n.
\end{equation}
The decomposition is not unique. However, as shown in the next Lemma, we can perform selections of the accompanying normal cone elements $(p_{n})_{n}$ which we can take as a bounded sequence, under the following assumption. 
\begin{assumption}\label{ass:Ainterior}
    The unique least norm solution $u^{*}=\operatorname{Proj}_{\scrS}(0)$ lies in the interior of the domain of $\opA$: $u^{*}\in\Int\dom\opA$. In particular, $\opA$ has a non-empty interior. 
\end{assumption}
 \begin{remark}
 The boundedness of $p_n$ established below does not depend on the specific choice of $v_n\in\opA u_n$.
\end{remark}
\begin{remark}
    Recently, the convergence of the Tikhonov trajectory to the least norm solution of splitting problems with operators having empty interiors has been established in \cite{garrido2026stochastic}. Their analysis is confined to finite-dimensional inclusions, hence weak convergence coincides with strong convergence. It is, however, likely that similar techniques can be applied to the present setting of this paper. We leave this technically interesting question for future research. 
\end{remark}
\begin{remark}
    In infinite dimensional Hilbert spaces, the non-empty interior assumption is of course quite demanding. It is, however, not too uncommon in splitting dynamics with obstacle terms (i.e. the hard constraint $\scrC$); see e.g. \cite{rascanu2014deterministic}. The technique described in \cite{garrido2026stochastic} is a potential avenue for relaxing this assumption. We leave this question for future research.
\end{remark}

\begin{lemma}\label{lem:pn}
There exists $N_{0}\ge 1$ such that
\begin{equation}\label{eq:K}
\kappa\eqdef\sup_{n\ge N_{0}}\norm{p_n}\;<\;+\infty
\end{equation}
for any selection $(v_n,p_n)$ satisfying \eqref{eq:undecomp}.
\end{lemma}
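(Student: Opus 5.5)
Since $p_n=-(v_n+\opD u_n+\eps_n u_n)$ by \eqref{eq:undecomp}, bounding $(p_n)$ amounts to bounding $v_n$, $\opD u_n$ and $\eps_n u_n$ uniformly for large $n$. The last two are straightforward. By Lemma \ref{lem:tikhonov}(i) we have $\norm{u_n}\le\norm{u^*}$, so $\norm{\eps_n u_n}\le\eps_n\norm{u^*}\le\eps_1\norm{u^*}$. Since $\opD$ is $\frac{1}{\eta}$-Lipschitz, $\norm{\opD u_n}\le\norm{\opD u^*}+\frac{1}{\eta}\norm{u_n-u^*}\le\norm{\opD u^*}+\frac{2}{\eta}\norm{u^*}$. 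The real task is therefore to bound every selection $v_n\in\opA u_n$, uniformly in $n\ge N_0$.

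For this I will use Assumption \ref{ass:Ainterior} together with the classical local boundedness of maximally monotone operators on the interior of their domain (Rockafellar's theorem, e.g.\ \cite{BauCom16}). Since $u^*\in\Int\dom\opA$, there are $r>0$ and $M<\infty$ with $\sup\{\norm{v}: v\in\opA z,\ \norm{z-u^*}\le r\}\le M$. By Lemma \ref{lem:tikhonov}(ii), $u_n=u^{\eps_n}\to u^*$ strongly because $\eps_n\to0$. Hence there is $N_0$ with $\norm{u_n-u^*}\le r$ for all $n\ge N_0$, and so $\norm{v_n}\le M$ for every choice $v_n\in\opA u_n$. This is exactly why the bound does not depend on the selection. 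Combining the three estimates gives
\[
\sup_{n\ge N_0}\norm{p_n}\le M+\norm{\opD u^*}+\tfrac{2}{\eta}\norm{u^*}+\eps_1\norm{u^*}<\infty .
\]

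The step that needs care is the local boundedness. It requires $\opA$ to be maximally monotone, which is (SH1), and it requires a neighbourhood of $u^*$ inside $\dom\opA$, which is Assumption \ref{ass:Ainterior}. It also relies on strong, not merely weak, convergence $u_n\to u^*$: along a weakly convergent sequence the operator need not stay bounded. Lemma \ref{lem:tikhonov}(ii) supplies the strong convergence, so this point is handled. Finally, $u_n\in\dom\opA$ holds automatically, since $\opA u_n\neq\emptyset$ by \eqref{eq:undecomp}.
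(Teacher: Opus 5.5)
Your proposal is correct and follows the paper's proof essentially step by step. It uses local boundedness of the maximally monotone $\opA$ on a ball around $u^*\in\Int\dom\opA$, then strong convergence $u_n\to u^*$ from Lemma~\ref{lem:tikhonov}(ii) to place $u_n$ in that ball for all $n\ge N_0$ uniformly over selections, and finally the triangle inequality on $p_n=-(v_n+\opD u_n+\eps_n u_n)$ with the Lipschitz bound on $\opD$ and $\norm{u_n}\le\norm{u^*}$. The only cosmetic differences are harmless: you bound $\norm{u_n-u^*}$ by $2\norm{u^*}$ via Lemma~\ref{lem:tikhonov}(i) instead of by convergence, and you use $\eps_n\le\eps_1$ (from the monotonicity in Assumption~\ref{ass:assumptionbasic}, though boundedness of $(\eps_n)$ would suffice anyway).
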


\begin{proof}
By \cite{BauCom16}, the maximally monotone operator $\opA$ is locally bounded at every interior point of its domain: since $u^*\in\Int\dom\opA$. There exist $r,M>0$ such that
$$
\opA(u)\subset B(0,M)\quad\text{(as a set)} \quad\text{for every }u\in B(u^{*},r).
$$
By Lemma~\ref{lem:tikhonov}\textup{(ii)}, $u_n\to u^{*}$, so there exists $N_{0}\ge 1$ with $u_n\in B(u^{*},r)$ for all $n\ge N_{0}$. In particular, \emph{any} selection
$v_n\in\opA u_n$ satisfies $\norm{v_n}\le M$. From \eqref{eq:undecomp},
$$
\norm{p_n}\le\norm{v_n}+\norm{\opD u_n}+\eps_n\norm{u_n}.
$$
Since $\opD$ is $\frac{1}{\eta}$-Lipschitz continuous, we have
$$
\norm{\opD x - \opD y} \le \eta^{-1} \norm{x - y}, \quad \forall x,y.
$$
Taking $y = u^*$, we obtain
$$
\norm{\opD u_n} \le \norm{\opD u^*} + \norm{\opD u_n - \opD u^*} \le \norm{\opD u^*} + \eta^{-1} \norm{u_n - u^*}.
$$
 Since $u_n \to u^*$, the sequence $\|u_n - u^*\|$ is bounded (and even converges to $0$). Hence, there exists a constant $C > 0$ such that
$$
\norm{\opD u_n} \le C,
$$
which shows that $(\opD u_n)_n$ is bounded. By Lemma~\ref{lem:tikhonov}\textup{(i)}, $\eps_n\norm{u_n}\le\eps_n\norm{u^{*}}$. Combining these three bounds, $\kappa=\sup_{n\ge N_{0}}\norm{p_n}<+\infty$.
\end{proof}

\subsection{Strong convergence to the minimum-norm solution}

In what follows, we assume Assumption \ref{A:standing_parameters} being in place. For each $n\ge 0$, denote by $u_n$ the unique zero of the strongly monotone operator $\opA+\opD_{\eps_n}+\NC_{\scrC}$. In other words,
$$0\in \opA u_n+\opD u_n+\eps_nu_n+\NC_{\scrC}(u_n),$$
which means that there exist $v_n\in \opA u_n$ and $p_n\in \NC_{\scrC}(u_n)$ such that
$$v_n+\opD u_n+\eps_nu_n+p_n=0.$$
At the same time, $(u_n,-\eps_nu_n)\in \gr(\opA+\opD_{\eps_n}+\NC_{\scrC})$ for all $n$. Using Proposition \ref{P:abstract_estimation}, we get
\begin{equation} \label{eq:discrete_strong}
\rho_{n+1}(u_n,d) - \rho_n(u_n,d) + \frac{4\nu_1}{9}\|x_{n+1}-x_n\|^2 + \lambda_n\eps_n\|y_n-u_n\|^2 \leq  \Delta_n(u_n,p_n) + \kappa(u_n,v_n,p_n)\lambda_n^2,
\end{equation}
where $\nu_1>0$ and $\rho_n$ is defined by \eqref{eq:rho_n}. The first two terms on the left-hand side of \eqref{eq:discrete_strong} are
\begin{align*}
\rho_{n+1}(u_n,d) - \rho_n(u_n,d)  
& = \norm{x_{n+1}-u_n}^2-\alpha_n\norm{u_n-x_n}^2+d\norm{x_{n+1}-x_n}^2 \\
& \quad - \norm{x_n-u_n}^2 +\alpha_{n-1}\norm{x_{n-1}-u_n}^2 - d\norm{x_n-x_{n-1}}^2 \\
& = \norm{x_{n+1}-u_n}^2 + \alpha_{n-1}\norm{x_{n-1}-u_n}^2 
-(1+\alpha_n)\norm{x_n-u_n}^2 \\
& \quad +d\big[\norm{x_{n+1}-x_n}^2-\norm{x_n-x_{n-1}}^2\big].
\end{align*}
Take a positive sequence $(\zeta_n)_{n}$, to be specified later. By Young's inequality, we have
\begin{align*}
\norm{x_{n+1}-u_n}^2 & \ge (1-\zeta_n)\|x_{n+1}-u_{n+1}\|^2+(1-\zeta_n^{-1})\norm{u_{n+1}-u_n}^2  \\
\alpha_{n-1}\norm{x_{n-1}-u_n}^2 & \ge \alpha_{n-1}(1-\zeta_{n-1})\norm{x_{n-1}-u_{n-1}}^2+\alpha_{n-1}(1-\zeta_{n-1}^{-1})\norm{u_{n-1}-u_n}^2.
\end{align*}
It follows that
\begin{align*}
\rho_{n+1}(u_n,d) - \rho_n(u_n,d) 
& \ge (1-\zeta_n)\|x_{n+1}-u_{n+1}\|^2+(1-\zeta_n^{-1})\norm{u_{n+1}-u_n}^2  \\
& \quad +\alpha_{n-1}(1-\zeta_{n-1})\norm{x_{n-1}-u_{n-1}}^2+\alpha_{n-1}(1-\zeta_{n-1}^{-1})\norm{u_{n-1}-u_n}^2 \\
& \quad -(1+\alpha_n)\norm{x_n-u_n}^2+d\big[\norm{x_{n+1}-x_n}^2-\norm{x_n-x_{n-1}}^2\big].
\end{align*}
For the last term on the left-hand side of \eqref{eq:discrete_strong}, use Young's inequality once more to get
\begin{align*}
\lambda_n\eps_n\|y_n-u_n\|^2 & = 
\lambda_n\eps_n\|x_n-u_n+\alpha_n(x_n-x_{n-1})\|^2\\
& \ge \frac{\lambda_n\eps_n}{2}\|x_n-u_n\|^2-\alpha_n^2\lambda_n\eps_n\|x_n-x_{n-1}\|^2.
\end{align*}
Back to \eqref{eq:discrete_strong}, this entails
\begin{align*}
\Delta_n(u_n,p_n) + \kappa(u_n,v_n,p_n)\lambda_n^2 
& \ge \rho_{n+1}(u_n,d) - \rho_n(u_n,d) + \frac{4\nu_1}{9}\norm{x_n-x_{n+1}}^2 + \lambda_n\eps_n\|y_n-u_n\|^2 \\
& \ge (1-\zeta_n)\|x_{n+1}-u_{n+1}\|^2+(1-\zeta_n^{-1})\norm{u_{n+1}-u_n}^2  \\
& \quad +\alpha_{n-1}(1-\zeta_{n-1})\norm{x_{n-1}-u_{n-1}}^2+\alpha_{n-1}(1-\zeta_{n-1}^{-1})\norm{u_{n-1}-u_n}^2 \\
& \quad -(1+\alpha_n)\norm{x_n-u_n}^2 +d\big[\norm{x_{n+1}-x_n}^2-\norm{x_n-x_{n-1}}^2\big]\\
& \quad + \frac{\lambda_n\eps_n}{2}\|x_n-u_n\|^2 + \frac{4\nu_1}{9}\norm{x_n-x_{n+1}}^2 -\alpha_n^2\lambda_n\eps_n\|x_n-x_{n-1}\|^2 \\
& = (1-\zeta_n)\big[\|x_{n+1}-u_{n+1}\|^2-\alpha_n\|x_n-u_n\|^2\big] + \big(\alpha_{n+1}^2\lambda_{n+1}\eps_{n+1}+d\big)\norm{x_{n+1}-x_n}^2\\
& \quad - (1-\zeta_{n-1})\big[\|x_n-u_n\|^2-\alpha_{n-1}\|x_{n-1}-u_{n-1}\|^2\big] -\big(\alpha_n^2\lambda_n\eps_n+d\big)\norm{x_n-x_{n-1}}^2 \\
& \quad +\left[\alpha_n(1-\zeta_n)+(1-\zeta_{n-1})-(1+\alpha_n)+\frac{\lambda_n\eps_n}{2}\right]\|x_n-u_n\|^2\\
& \quad +(1-\zeta_n^{-1})\norm{u_{n+1}-u_n}^2 +\alpha_{n-1}(1-\zeta_{n-1}^{-1})\norm{u_{n-1}-u_n}^2 \\
& \quad + \left(\frac{4\nu_1}{9}-\alpha_{n+1}^2\lambda_{n+1}\eps_{n+1}\right)\norm{x_{n+1}-x_n}^2.
\end{align*}
Writing 
\begin{equation} \label{eq:definition_varrho}
    \varrho_n:=(1-\zeta_{n-1})\big[\|x_n-u_n\|^2-\alpha_{n-1}\|x_{n-1}-u_{n-1}\|^2\big]
    +\big(\alpha_n^2\lambda_n\eps_n+d\big)\norm{x_n-x_{n-1}}^2,
\end{equation}
simplifying, and discarding some of the positive terms, we obtain
\begin{align*}
\Delta_n(u_n,p_n) + \kappa(u_n,v_n,p_n)\lambda_n^2
& \ge \varrho_{n+1}-\varrho_n  +\left[\frac{\lambda_n\eps_n}{2}-\alpha\zeta_n-\zeta_{n-1}\right]\|x_n-u_n\|^2\\
& \quad -\zeta_n^{-1}\norm{u_{n+1}-u_n}^2 -\alpha\zeta_{n-1}^{-1}\norm{u_{n-1}-u_n}^2  + \left(\frac{4\nu_1}{9}-\alpha^2\lambda_{n+1}\eps_{n+1}\right)\norm{x_{n+1}-x_n}^2.
\end{align*}
For simplicity, we assume that the sequence $(\lambda_n\eps_n)$ is nonincreasing, and define 
$$\zeta_n:=\frac{\lambda_{n+1}\eps_{n+1}}{3(1+\alpha)}.$$
Then, 
$$\frac{\lambda_n\eps_n}{2}-\alpha\zeta_n-\zeta_{n-1} \ge \frac{\lambda_n\eps_n}{2}-(1+\alpha)\zeta_{n-1}=\frac{\lambda_n\eps_n}{2}-\frac{\lambda_n\eps_n}{3}=\frac{\lambda_n\eps_n}{6},$$
and so
$$\varrho_{n+1}-\varrho_n +\frac{\lambda_n\eps_n}{6}\|x_n-u_n\|^2 + \left(\frac{4\nu_1}{9}-\alpha^2\lambda_{n+1}\eps_{n+1}\right)\norm{x_{n+1}-x_n}^2
\le \Delta_n(u_n,p_n) + \kappa(u_n,v_n,p_n)\lambda_n^2 + r_n +r_{n-1},
$$
where we have written
$$r_n:=\frac{3(1+\alpha)\norm{u_{n+1}-u_n}^2}{\lambda_{n+1}\eps_{n+1}}.$$
Since $\lambda_n\eps_n\to 0$ 
, we may assume, without loss of generality, that $\alpha^2\lambda_{n+1}\eps_{n+1}\le\frac{\nu_1}{9}$. Therefore,
\begin{equation} \label{eq:varrho_strong}
\varrho_{n+1}-\varrho_n + \frac{\nu_1}{3}\norm{x_{n+1}-x_n}^2 +\frac{\lambda_n\eps_n}{6}\|x_n-u_n\|^2 \le \Delta_n(u_n,p_n) + \kappa(u_n,v_n,p_n)\lambda_n^2 + r_n + r_{n-1}.
\end{equation}

\begin{assumption} \label{A:Attouch_Czarnecki_strong}
    \begin{itemize}
        \item [i)] $(\lambda_n)\in\ell^2_{+}(\N)$ and satisfies slow decay: 
        $$\sum_{n\ge 0}\frac{(\eps_{n+1}-\eps_n)^2}{\lambda_{n+1}\eps_{n+1}^3}.$$
        \item [ii)] (locally uniform Attouch-Czarnecki condition) One of the following conditions holds:
    \begin{itemize}
    \item $\opB_1\equiv 0$ and 
    $$\sum_{n\ge 0}\lambda_n\beta_n\left[\varphi_{\opB_2}\bigg(u_n,\frac{p_n}{\beta_n}\bigg)-\sigma_\scrC\bigg(\frac{p_n}{\beta_n}\bigg)\right]<\infty;$$
    \item $\opB_2\equiv 0$ and 
    $$\sum_{n\ge 0}\lambda_n\beta_n\left[\varphi_{\opB_1}\bigg(u_n,\frac{p_n}{\beta_n}\bigg)-\sigma_\scrC\bigg(\frac{p_n}{\beta_n}\bigg)\right]<\infty;$$
    \item $\opB_1=\nabla \Psi_1$, $\opB_2=\partial \Psi_2$ and 
    $$\sum_{n\ge 0}\lambda_n\beta_n\left[(\Psi_1+\Psi_2)^*\bigg(\frac{p_n}{\beta_n}\bigg)-\sigma_\scrC\bigg(\frac{p_n}{\beta_n}\bigg)\right]<\infty.$$
    \end{itemize}
    \end{itemize}
\end{assumption}

\begin{theorem}\label{th:strongconvergence_main}
Let Assumptions \ref{A:standing_parameters}, \ref{ass:Ainterior} and \ref{A:Attouch_Czarnecki_strong} hold. Let $(x_n,y_n)_{n\geq 0}$ be generated by \eqref{E:Algorithm4}. Then, both $x_n$ and $y_n$ converge strongly, as $n \to \infty$, to the least-norm element of $\zer(\opA+\opD+\NC_{\scrC})$.
\end{theorem}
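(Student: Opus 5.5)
The plan is to work from the already derived inequality \eqref{eq:varrho_strong}. With $\omega_n:=\|x_n-u_n\|^2$ it reads
\[
\varrho_{n+1}-\varrho_n+\tfrac{\nu_1}{3}\|x_{n+1}-x_n\|^2+\tfrac{\lambda_n\eps_n}{6}\,\omega_n\le \Delta_n(u_n,p_n)+\kappa(u_n,v_n,p_n)\lambda_n^2+r_n+r_{n-1}.
\]
We show that the right-hand side is summable. We then deduce that $\varrho_n$ converges and that $\sum\lambda_n\eps_n\omega_n<\infty$. Combined with $\sum\lambda_n\eps_n=\infty$, this will force $\omega_n\to0$. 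Finally, Lemma \ref{lem:tikhonov}(ii) together with $\|y_n-x_n\|\to0$ gives the claim for both $x_n$ and $y_n$.

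\textbf{Step 1: summability of the right-hand side.} By Lemma \ref{lem:pn}, $\sup_{n\ge N_0}\|p_n\|<\infty$. By Assumption \ref{ass:Ainterior} and local boundedness, $\|v_n\|\le M$ for $n\ge N_0$, and $\|u_n\|\le\|u^*\|$. Hence the bound on $\kappa(u,v,p)$ after \eqref{eq:kappa_uvp} shows $\sup_n\kappa(u_n,v_n,p_n)<\infty$. Since $(\lambda_n)\in\ell^2$, the term $\kappa\lambda_n^2$ is summable.

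The term $\Delta_n(u_n,p_n)$ is summable by Assumption \ref{A:Attouch_Czarnecki_strong}(ii). In Cases 2 and 3 the argument $p_n/(\gamma_2\beta_n)$ appears instead of $p_n/\beta_n$. I would absorb $\gamma_2$ either by reading the assumption with that scaling or by renaming $\beta_n$. This is a minor bookkeeping point.

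For $r_n$, Lemma \ref{lem:tikhonov}(iii) gives $\|u_{n+1}-u_n\|\le\|u^*\|(\eps_n-\eps_{n+1})/\eps_{n+1}$. Therefore
\[
r_n\le 3(1+\alpha)\|u^*\|^2\,\frac{(\eps_n-\eps_{n+1})^2}{\lambda_{n+1}\eps_{n+1}^3},
\]
which is summable by Assumption \ref{A:Attouch_Czarnecki_strong}(i), read as a finiteness condition.

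\textbf{Step 2: convergence of $\varrho_n$.} To get a limit for $\varrho_n$, I would apply Lemma \ref{L:real1} in the form $\varrho_{n+1}=(1-\zeta_n)[\omega_{n+1}-\alpha_n\omega_n]+b_n$. This requires a lower bound on $\varrho_n$. Here I would use the contradiction argument from the proof of Lemma \ref{L:real1}(i): if $\varrho_n\to-\infty$, then $\omega_{n+1}-\alpha_n\omega_n$ would become very negative. Since $\alpha_n\le\alpha<1/3$, this would force $\omega_n$ to become negative, which is impossible.

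From this we get $\sum\|x_{n+1}-x_n\|^2<\infty$ and $\sum\lambda_n\eps_n\omega_n<\infty$. Because $\zeta_n\to0$ and $\alpha_n\to a<1$, part (ii) of Lemma \ref{L:real1} (slightly adapted for the factor $1-\zeta_n$) shows that $\lim\omega_n$ exists.

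\textbf{Step 3: identifying the limit.} I need $\sum\lambda_n\eps_n=\infty$. This should follow from the slow-decay condition, or else be added as an implicit hypothesis, since it is needed anyway. If instead one only knows $\lambda_n\notin\ell^1$, one cannot conclude this directly; I would note that the slow-decay condition with $\eps_n$ non-increasing essentially requires $\lambda_n\eps_n$ not summable. Given divergence, $\liminf\omega_n=0$, so $\lim\omega_n=0$. Then $\|x_n-u^*\|\le\sqrt{\omega_n}+\|u_n-u^*\|\to0$, and $\|y_n-x_n\|\le\alpha\|x_n-x_{n-1}\|\to0$.

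\textbf{Main obstacle.} I expect the main difficulty to be Step 2, namely establishing that $\varrho_n$ converges despite the reference point $u_n$ moving and the $n$-dependent weights $1-\zeta_n$ and $\alpha_n^2\lambda_n\eps_n$. If the direct route fails, I would first try to bypass convergence of $\varrho_n$ altogether: sum the inequality, use $\varrho_n\ge-\alpha\omega_{n-1}$, and show by induction that $\omega_n$ is bounded. Then $\sum\lambda_n\eps_n\omega_n<\infty$ gives $\liminf\omega_n=0$, and a standard lim-sup argument on the summable perturbations upgrades this to full convergence.
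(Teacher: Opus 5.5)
Your proposal is correct and follows essentially the same route as the paper: you establish summability of $\Delta_n(u_n,p_n)+\kappa(u_n,v_n,p_n)\lambda_n^2+r_n+r_{n-1}$ via Lemma \ref{lem:pn}, Lemma \ref{lem:tikhonov}(iii) and Assumption \ref{A:Attouch_Czarnecki_strong}, then apply Lemma \ref{L:real1} to \eqref{eq:varrho_strong}, and conclude with $\sum\lambda_n\eps_n=\infty$ and $u_n\to u^*$; writing $\varrho_{n+1}=(1-\zeta_n)[\omega_{n+1}-\alpha_n\omega_n]+b_n$ is in fact slightly more accurate than the paper's choice $a_n=\alpha_n/(1-\zeta_{n-1})$. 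The divergence $\sum\lambda_n\eps_n=\infty$, which you left as ``should follow'' and which the paper simply asserts, does follow from the slow-decay condition by Cauchy--Schwarz, $\sum_{n=1}^{N}\frac{\eps_n-\eps_{n+1}}{\eps_{n+1}}\le\big(\sum_{n=1}^{N}\frac{(\eps_n-\eps_{n+1})^2}{\lambda_{n+1}\eps_{n+1}^3}\big)^{1/2}\big(\sum_{n=1}^{N}\lambda_{n+1}\eps_{n+1}\big)^{1/2}$, because the left side is at least $\log(\eps_1/\eps_{N+1})\to\infty$ (use $t-1\ge\log t$).
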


\begin{proof}
Define
\begin{align*}
    \omega_n & := (1-\zeta_{n-1})\|x_n-u_n\|^2 \\
    a_n & := \frac{\alpha_n}{(1-\zeta_{n-1})} \\
    b_n & := \big(\alpha_n^2\lambda_n\eps_n+d\big)\norm{x_n-x_{n-1}}^2 \\
    \rho_n & := \varrho_n \\
    d_n & := \Delta_n(u_n,p_n) + \kappa(u_n,v_n,p_n)\lambda_n^2 + r_n + r_{n-1} \\
    e_n & := \frac{\nu_1}{3}\norm{x_{n+1}-x_n}^2 + \frac{\lambda_n\eps_n}{6}\|x_n-u_n\|^2.
\end{align*}
In view of \eqref{eq:definition_varrho} and \eqref{eq:varrho_strong}, if $\sum d_n<\infty$, Lemma \ref{L:real1} shows that 
$$\lim_{n\to\infty}\|x_n-u_n\|=\lim_{n\to\infty}\omega_n$$
exists, and that $$\sum_{n\ge 0}\lambda_n\eps_n\|x_n-u_n\|^2<\infty.$$
Since $\sum\lambda_n\eps_n=\infty$, necessarily $\lim_{n\to\infty}\|x_n-u_n\|=0$. Since $u_n$ converges strongly to $u^*$, so do $x_n$ and $y_n$. It only remains to prove that $\sum d_n<\infty$. Combining part i) of Assumption \ref{A:Attouch_Czarnecki_strong} with Lemma \ref{lem:pn}, we see that $v_n$ and $p_n$ are bounded, which implies that
$\mathcal K:=\sup_{n\ge 0}\kappa(u_n,v_n,p_n)<\infty$. It follows that $\sum\kappa(u_n,v_n,p_n)\lambda_n^2<\infty$. \\
Inequality \eqref{eq:bvtikh} of Lemma \ref{lem:tikhonov} gives
$$r_n=\frac{3(1+\alpha)\norm{u_{n+1}-u_n}^2}{\lambda_{n+1}\eps_{n+1}}\le \frac{3(1+\alpha)\norm{u^*}^2}{\lambda_{n+1}\eps_{n+1}}\left(\frac{\eps_n-\eps_{n+1}}{\eps_{n+1}}\right)^2.$$
By part ii) of Assumption \ref{A:Attouch_Czarnecki_strong}, $\sum r_n<\infty$. Finally, part iii) of Assumption \ref{A:Attouch_Czarnecki_strong} gives the summability of $ \Delta_n(u_n,p_n)$, and allows us to conclude.
\end{proof}

\section{Numerical Experiments}
\label{sec:numerics}
In this section, we present numerical experiments to demonstrate the applicability of the proposed numerical scheme. To evaluate the effect of inertia, we compare the inertial FB dynamical system algorithm \ref{E:Algorithm4} with its non-inertial counterpart (i.e., $\alpha_n \equiv 0$), analyzed in \cite{peypouquet2026asymptotic}.

\subsection{Problem description}
We consider the image inpainting problem from \cite{cortild2025regularization}. We represent a grayscale image $Y$ of dimension
$(P, Q)$ by a matrix in $\mathcal{X} \subseteq [0,1]^{P \times Q} \subset \mathcal{H} \coloneqq
\mathbb{R}^{P \times Q}$. We denote by $\Omega \in \{0,1\}^{P \times Q}$ a mask such that
$\Omega_{i,j} = 0$ indicates that the pixel at position $(i,j)$ has been damaged, and define the masking operator $R \colon \mathcal{H} \to \mathcal{H},\; Y \mapsto \Omega \odot Y$,
where $\odot$ denotes element-wise multiplication. We note that $R$ is self-adjoint with
operator norm $1$. The corrupt image is $Y_{\mathrm{corrupt}} \coloneqq R(Y)$, and the
recovery problem reads
$$
    \min_{Y \in \mathcal{X}} \left\{
        \frac{1}{2}\norm{R(Y) - Y_{\mathrm{corrupt}}}^2 + \sigma\norm{Y}_*
    \right\},
$$
where $\norm{\cdot}_*$ denotes the nuclear norm, which promotes low-rank structure within the image,
and $\sigma > 0$ is a regularisation parameter. Specifically, we seek the least-norm solution to the above problem, given by
$$
    \min_{Y \in \mathcal{H}} \left\{ \frac{1}{2}\norm{Y}^2 \;:\; Y \in \operatorname*{arg\,min}_{Y \in \mathcal{X}}
        \left\{ \frac{1}{2}\norm{R(Y) - Y_{\mathrm{corrupt}}}^2 + \sigma\norm{Y}_* \right\}
    \right\}.
$$
This matches problem (\ref{eq:MI}) with $\opA = \partial\norm{\cdot}_*$,The proximal step is computed via singular value
soft-thresholding:
$$
    \operatorname{prox}_{\lambda_n \sigma \norm{\cdot}_*}(Z)= U \operatorname{diag}\bigl(\max(s_i - \lambda_n \sigma,\, 0)\bigr) V^\top,
$$
where $Z = U \operatorname{diag}(s) V^\top$ is the singular value decomposition of $Z$. $\opD(Y)= \nabla f(Y)= R^\top(R(Y) -Y_{\mathrm{corrupt}})$, $\opB_{1}= \nabla\Psi$ with $\Psi(Y) =\frac{1}{2}\sum_{i,j}\bigl[\max(Y_{i,j}-1,0)^2 + \min(Y_{i,j},0)^2\bigr]$, which enforces the box constraint $\mathcal{X}$ progressively via the growing weight $\beta_n$.

\paragraph{Parameters Setup.}
To identify the optimal parameter configuration independently of the inertial effect, we first set $\alpha_n=0$. To meet Assumption \ref{A:standing_parameters}, we impose the design criterion $\lambda_n<0.5\mu/\beta_n$. In the numerical experiments, we choose $\lambda_n=0.4\mu/\beta_n$ for non-inertial case, and parameter sequences $\beta_n,\eps_n$ satisfying 
$$
\beta_n=C_{\beta}(1+n)^{e_{\beta}},\qquad \eps_n=\frac{C_{\eps}}{(1+n)^{e_{\eps}}}
$$
since $(\lambda_n)_{n\geq 1}\in\ell^2_{+}(\N)\setminus \ell^1_{+}(\N)$, we need to impose the restriction $1/2<e_{\beta}<1$, meanwhile, $(\lambda_n\eps_n)_{n\geq1}\in \ell^1_{+}(\N)$ applies $e_{\beta}+e_{\eps}>1$.  Additionally $\lambda_n\le\frac{\eps_n}{4(L+\eps_n)^2}$, which implies $e_{\beta}>e_{\eps}$.

Next, we conduct a grid search over the multipliers $C_{\beta}, C_{\varepsilon} \in \{0.1, 1, 10\}$ and the exponents $e_{\beta}, e_{\varepsilon} \in \{0.6, 0.7, 0.8, 0.9, 1.0\}$, subject to the constraint $e_{\beta} > e_{\varepsilon}$, under the setting of 20\% missing pixels and regularisation parameter $\sigma = 50$, consistent with the conditions above. The search reveals that the optimal multipliers are $C_{\beta} = C_{\varepsilon} = 0.1$.
\begin{figure}[H]
\centering
\includegraphics[width=1.0\textwidth]{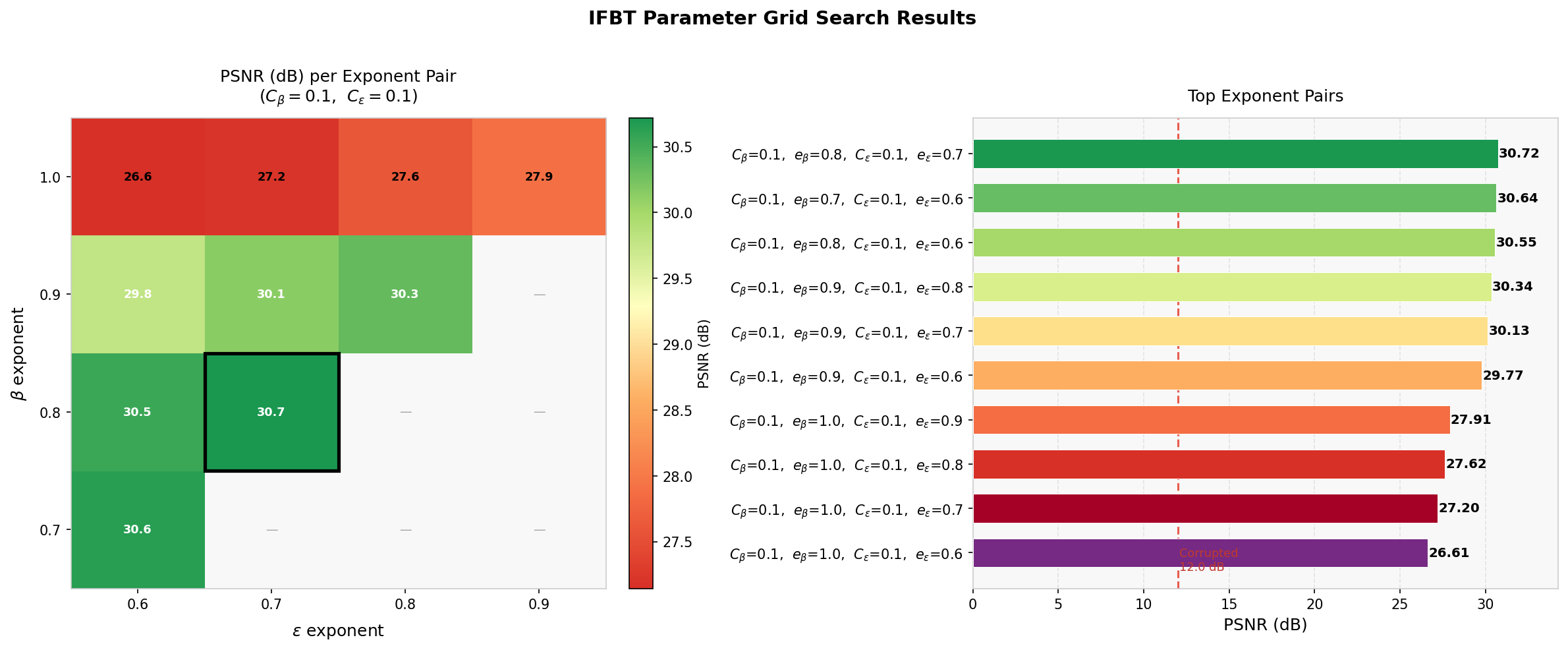}
\caption{Comparison of parameters when $20\%$ corruption }
\label{fig:grid_search}
\end{figure}

 The left side of Figure \ref{fig:grid_search} displays the PSNR heatmap over exponent pairs $(e_{\beta}, e_{\varepsilon})$ at these optimal multipliers, evaluated using the non-inertial algorithm~\textup{(FBT)}, from which the best exponent configuration is identified as $e_{\beta} = 0.8$ and $e_{\varepsilon} = 0.7$. The right side shows the top 10 exponent pairs setting.
 Based on these observations, we fix the parameters for all subsequent inpainting experiments as
$$
    \beta_n = \frac{1}{10}(1+n)^{0.8}, \qquad \varepsilon_n = \frac{1}{10(1+n)^{0.7}}.
$$

\paragraph{Experimental Setup.}
We consider a $256 \times 256$ grayscale image under two experimental settings with different levels of missing data: $20\%$ missing pixels (relatively easy) and $50\%$ missing pixels (more challenging). In both cases, the algorithm is initialised at $X_0 = Y_{\mathrm{corrupt}}$ and we run $N = 2{,}000$ outer iterations.

\subsection{Parameter selection under the theoretical conditions}
In order to meet the requirements formulated in Assumption \ref{A:standing_parameters}, we choose the stepsize as
$$
\lambda_n=\frac{0.49*\mu(1-3\alpha_n)}{\beta_n(1+\alpha_n^2)}
$$
This choice shows that the admissible stepsize decreases as the inertial parameter increases. To avoid an excessively small stepsize while still benefiting from inertial acceleration, we adopt a slowly increasing inertial parameter,
$$\alpha_n=\frac{1}{6}*\frac{n}{N},$$
where $N=2,000$ is the maximum number of iterations, and $\alpha_n$ increases linearly with the iteration number and remains bounded above by $1/6$.
\paragraph{Results.}
We begin by evaluating the proposed algorithm on a $256 \times 256$ grayscale image with $20\%$ of pixels randomly corrupted. The regularisation parameter is set to $\sigma = 50$.
\begin{figure}[H]
\centering
\includegraphics[width=1.0\textwidth]{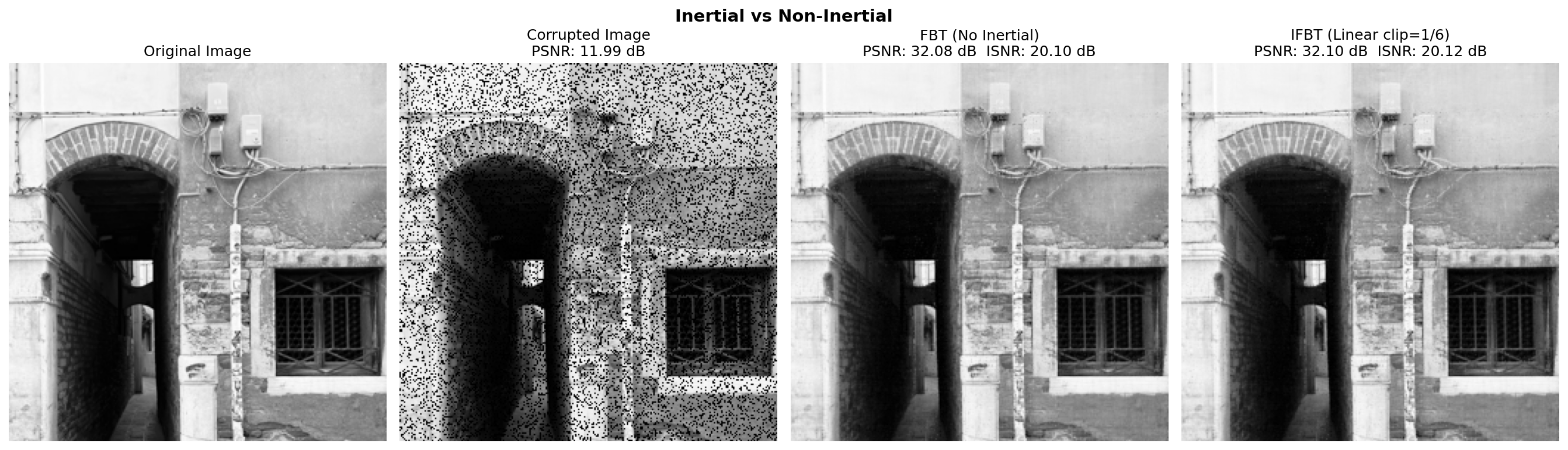}
\caption{Comparison of FBT and IFBT on image inpainting when $20\%$ corruption }
\label{fig:originimages20}
\end{figure}

\begin{figure}[H]
\centering
\includegraphics[width=1.0\textwidth]{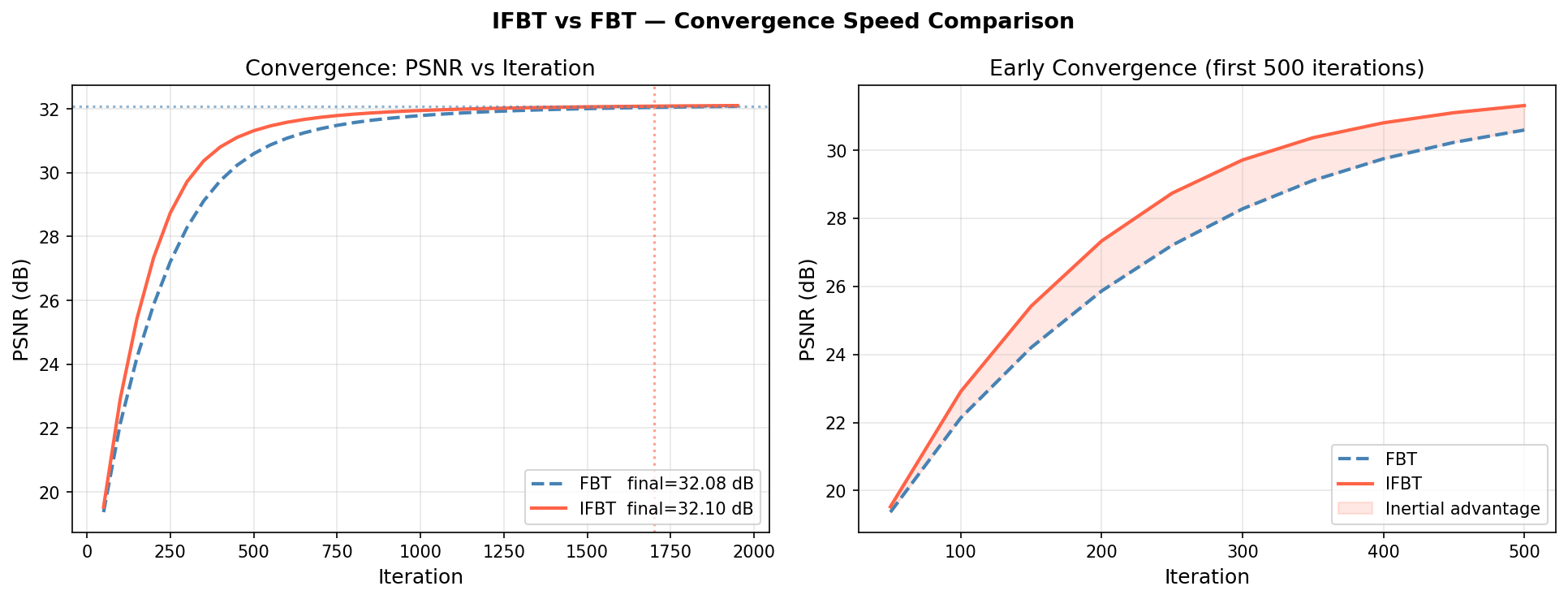}
\caption{Comparison of FBT and IFBT on iterations when $20\%$ corruption }
\label{fig:origincurves20}
\end{figure}
Figure~\ref{fig:originimages20} compares the convergence behavior of IFBT and FBT in terms of PSNR. While both methods achieve high-quality reconstructions, IFBT converges considerably faster. In particular, it reaches the final PSNR attained by FBT $32.08\,\mathrm{dB}$ after $1{,}700$ iterations, corresponding to a reduction of 300 iterations ($15.0\%$), in computational effort.

We further evaluate the algorithm under a more challenging setting, where the corruption rate is increased to $50\%$ of randomly removed pixels and $\sigma = 80$. This higher corruption level serves to stress-test the robustness of the proposed method and to better highlight the performance differences between FBT and IFBT under difficult recovery conditions.
\begin{figure}[H]
\centering
\includegraphics[width=1.0\textwidth]{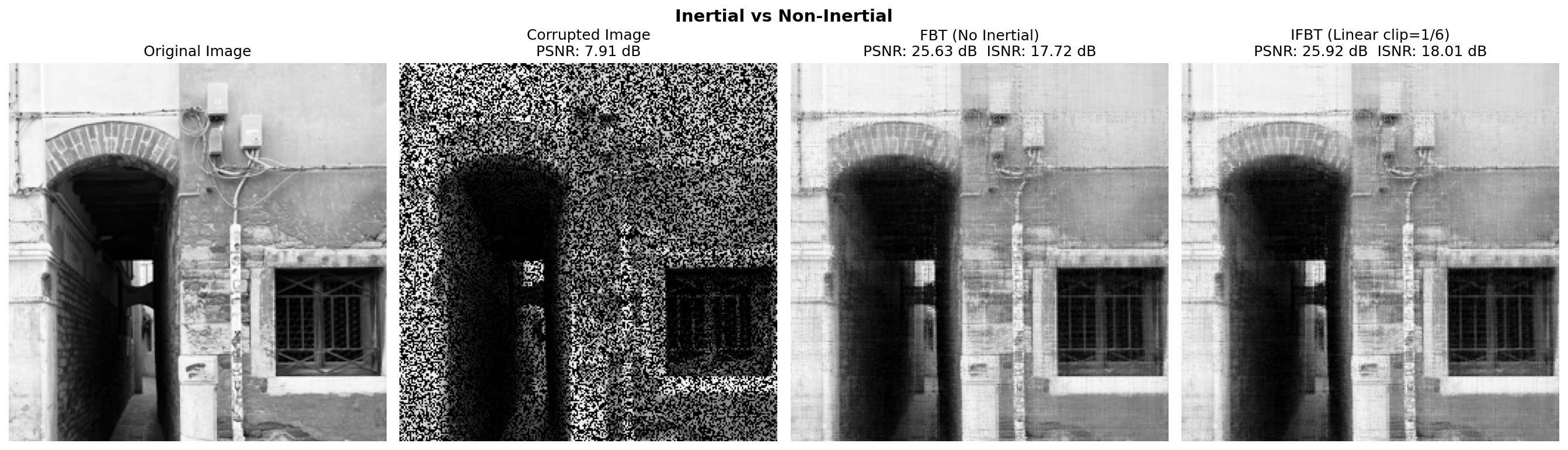}
\caption{Comparison of FBT and IFBT on image inpainting when $20\%$ corruption }
\label{fig:originimages50}
\end{figure}

\begin{figure}[H]
\centering
\includegraphics[width=1.0\textwidth]{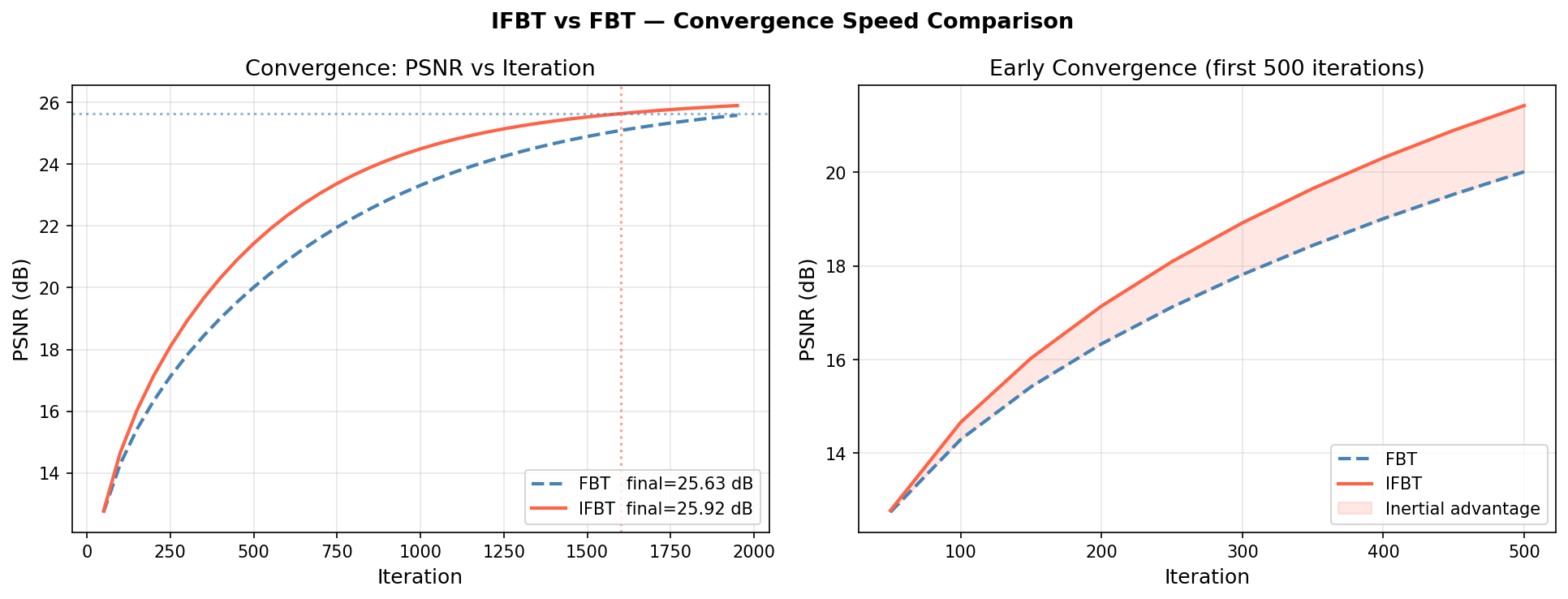}
\caption{Comparison of FBT and IFBT on iterations when $20\%$ corruption }
\label{fig:origincurves50}
\end{figure}
Figure~\ref{fig:origincurves50} reports the reconstruction quality and convergence speed for both methods. Despite the significantly higher corruption level, IFBT converges considerably faster. In particular, it reaches the final PSNR attained by FBT $25.63\,\mathrm{dB}$ after only $1{,}600$ iterations, corresponding to a reduction of 400 iterations ($20.0\%$), in computational effort.

\subsection{Decoupled inertial acceleration}
Although the theoretical analysis requires the stepsize and inertial parameters to be coupled, we notice better numerical performance is obtained by decoupling them. Accordingly, we apply inertia only in the extrapolation step $y_n$ and set the inertial parameter to 0 in the stepsize. The resulting performance is reported below.

In this case, since we decoupled them, the growth of $\alpha_n$ will not affect $\lambda_n$ anymore, we adopt a Nesterov-type inertial coefficient defined as
$$
\alpha_n = \min\left(\frac{9}{30}, \max\left(0, \frac{t_n - 1}{t_{n+1}}\right)\right),
$$
where the auxiliary sequence $\{t_n\}$ is given by
$$
t_{n} = \frac{1 + \sqrt{1 + 4 (n+1)^2}}{2}, \quad n \ge 0.
$$
This ensures $\alpha_n$ is non-decreasing and confined to $(0, 1/3)$. 

\paragraph{Results.}
We begin by evaluating the proposed algorithm on a $256 \times 256$ grayscale image with $20\%$ of pixels randomly corrupted. The regularisation parameter is set to $\sigma = 50$.

\begin{figure}[H]
\centering
\includegraphics[width=1.0\textwidth]{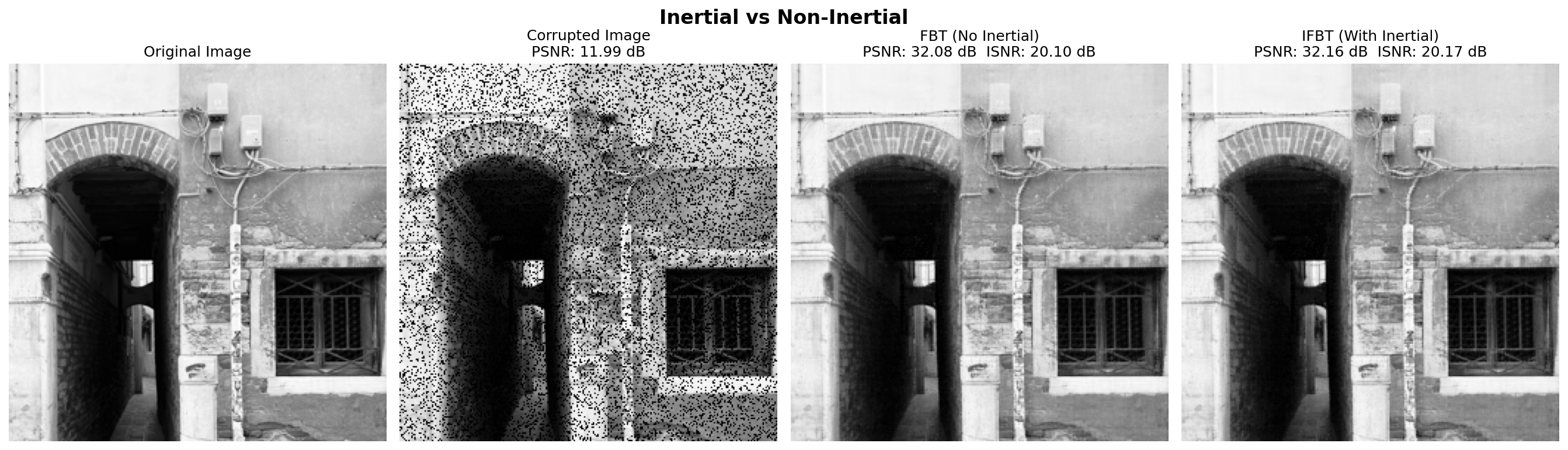}
\caption{Comparison of FBT and IFBT on image inpainting when $20\%$ corruption }
\label{fig:images20}
\end{figure}

\begin{figure}[H]
\centering
\includegraphics[width=1.0\textwidth]{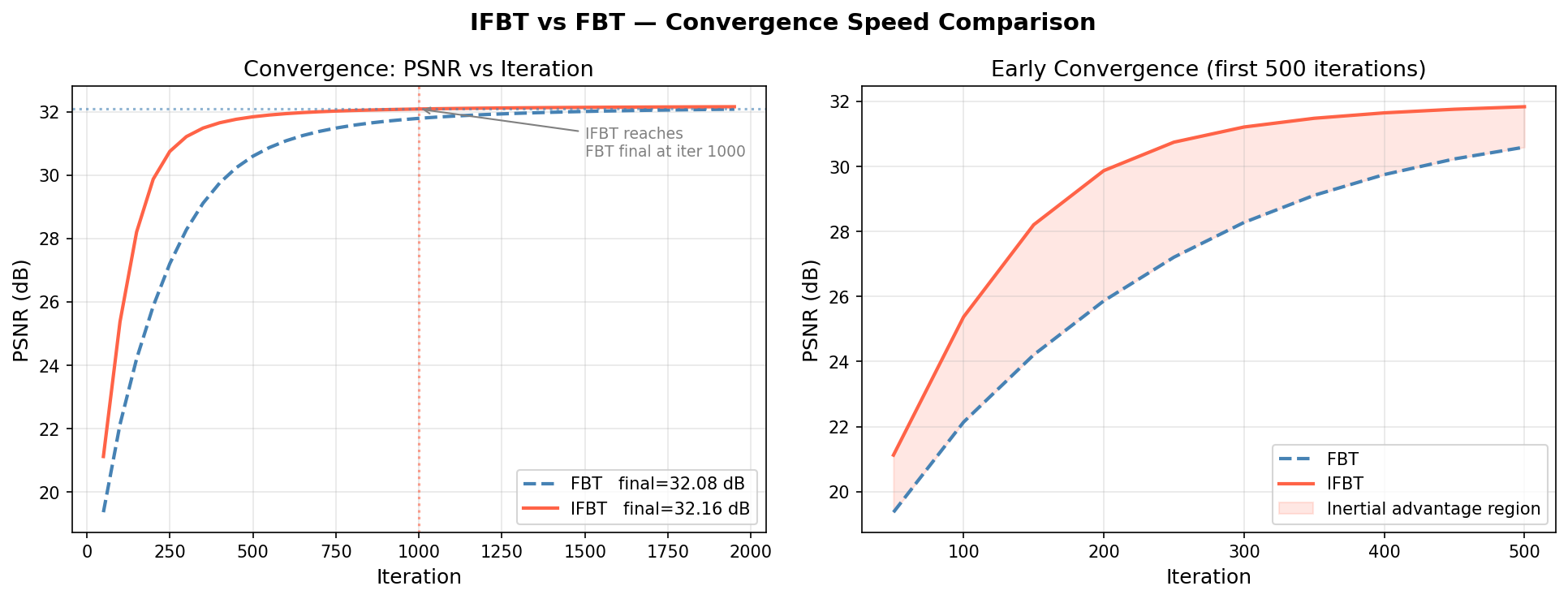}
\caption{Comparison of FBT and IFBT on iterations when $20\%$ corruption }
\label{fig:curves20}
\end{figure}

Figure~\ref{fig:curves20} reports the reconstruction quality in terms of PSNR, together with the convergence speedup of IFBT over FBT. Both methods recover the image above $30\,\mathrm{dB}$, confirming the effectiveness of nuclear norm regularisation for inpainting. IFBT achieves a higher final PSNR while reaching the FBT final quality level $1{,}000$ iterations earlier, corresponding to a $50.0\%$ saving in computational effort.

We further evaluate the algorithm under a more challenging setting, where the corruption rate is increased to $50\%$ of randomly removed pixels and $\sigma = 80$. 

\begin{figure}[H]
\centering
\includegraphics[width=1.0\textwidth]{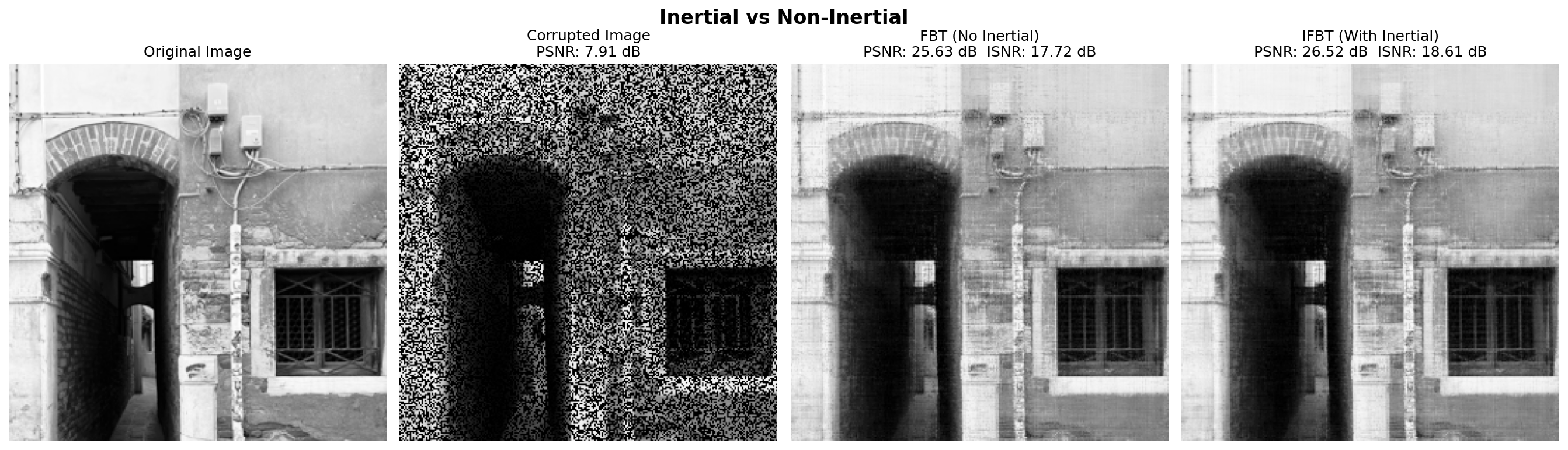}
\caption{Comparison of FBT and IFBT on image inpainting when $50\%$ corruption }
\label{fig:images50}
\end{figure}

\begin{figure}[H]
\centering
\includegraphics[width=1.0\textwidth]{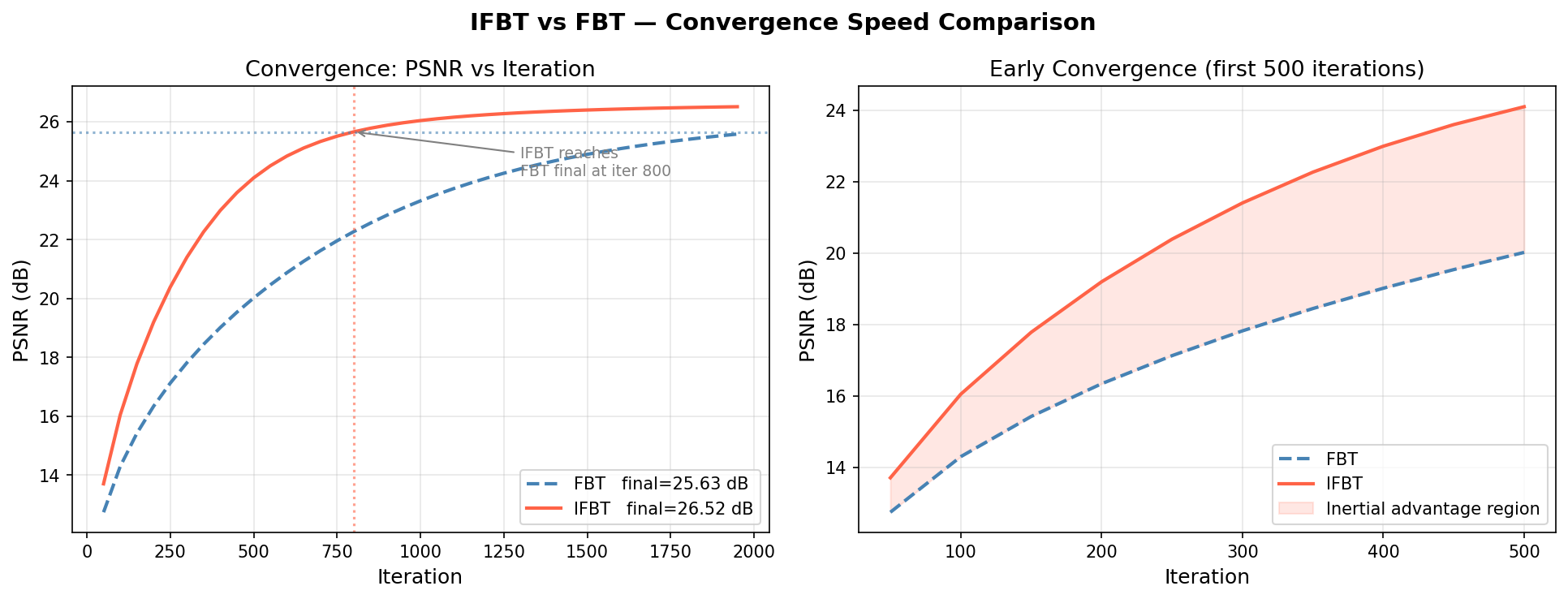}
\caption{Comparison of FBT and IFBT on iterations when $50\%$ corruption }
\label{fig:curves50}
\end{figure}

Figure \ref{fig:curves50} reports the reconstruction quality and convergence speed for both methods. Despite the significantly higher corruption level, IFBT achieves a higher final PSNR. Notably, IFBT reaches the quality level obtained by FBT after 2,000 iterations at 800 iterations, corresponding to a $60.0\%$ reduction in computational effort. This further demonstrates the advantage of inertial acceleration under challenging recovery conditions.

From the results, the inertial term significantly accelerates the convergence process. In particular, IFBT achieves
better reconstruction quality than FBT with approximately half the number of iterations. Moreover, the improvement is particularly pronounced in the early iterations, where the inertial effect rapidly guides the solution toward a high-quality estimate. 

These observations suggest that the inertial mechanism is particularly beneficial when the reconstruction problem becomes more difficult. When fewer pixels are observed, the baseline method converges more slowly, while the inertial term helps accelerate the iterative updates. These results demonstrate the effectiveness of the inertial mechanism in improving both convergence speed and reconstruction performance.

\section{Conclusion}
\label{sec:conclusion}
In this paper we have studied forward-backward penalty methods for solving a general system of constrained variational inequalities. Introducing inertial effects to potentially boost the numerical performance of the method, together with Tikhonov regularization to promote strong convergence, we give a general analysis on the 
long-run behavior of the algorithmic sequence. Invoking the celebrated Attouch-Czarnecki condition to deal with the penalty terms, we prove a general weak convergence theorem on the ergodic sequence. Adding an additional non-degenericity assumption on the data of the problem, we prove strong convergence of the last iterate of the numerical scheme.
Future research should study in depth the quantitative implications of inertia. Second, it seems to be possible to extend some of the results to the stochastic regime, in the spirit of \cite{luke2026asymptotic}. We leave these investigations for future research.

\paragraph{Acknowledgements.} {This research benefited from the support of the FMJH Program PGMO. MST's research is supported by the Deutsche Forschungsgemeinschaft (DFG) - Projektnummer 556222748 "non-stationary hierarchical minimization". Siqi Qu is supported by a CSC scholarship. }

%

\bibliographystyle{plain}
\bibliography{PenaltyDynamics}

@article{attouch2011prox,
  title={Prox-penalization and splitting methods for constrained variational problems},
  author={Attouch, H{\'e}dy and Czarnecki, Marc-Olivier and Peypouquet, Juan},
  journal={SIAM Journal on Optimization},
  volume={21},
  number={1},
  pages={149--173},
  year={2011},
  publisher={SIAM}
}

@article{hintermuller2015generalized,
  title={Generalized Nash equilibrium problems in Banach spaces: Theory, Nikaido--Isoda-based path-following methods, and applications},
  author={Hintermüller, M and Surowiec, Thomas and Kämmler, A},
  journal={SIAM Journal on Optimization},
  volume={25},
  number={3},
  pages={1826--1856},
  year={2015},
  publisher={SIAM}
}

@article{garrido2026stochastic,
  title={Stochastic Differential Inclusions driven by Maximal Monotone Operators with empty interiors},
  author={Garrido, Juan Guillermo and P{\'e}rez-Aros, Pedro and Staudigl, Mathias},
  journal={arXiv preprint arXiv:2602.23145},
  year={2026}
}

@article{rascanu2014deterministic,
  title={Deterministic and stochastic differential equations in Hilbert spaces involving multivalued maximal monotone operators},
  author={Rascanu, Aurel},
  journal={arXiv preprint arXiv:1402.0748},
  year={2014}
}

@article{luke2026asymptotic,
  title={Asymptotic behaviour of coupled random dynamical systems with multiscale aspects},
  author={Luke, D Russell and Schnebel, Johannes-Carl and Staudigl, Mathias and Peypouquet, Juan and Qu, Siqi},
  journal={arXiv preprint arXiv:2601.15411},
  year={2026}
}

@article{dreves2018generalized,
  title={A generalized Nash equilibrium approach for optimal control problems of autonomous cars},
  author={Dreves, Axel and Gerdts, Matthias},
  journal={Optimal Control Applications and Methods},
  volume={39},
  number={1},
  pages={326--342},
  year={2018},
  publisher={Wiley Online Library}
}

@article{kanzow2019multiplier,
  title={The multiplier-penalty method for generalized Nash equilibrium problems in Banach spaces},
  author={Kanzow, Christian and Karl, Veronika and Steck, Daniel and Wachsmuth, Daniel},
  journal={SIAM Journal on Optimization},
  volume={29},
  number={1},
  pages={767--793},
  year={2019},
  publisher={SIAM}
}

@article{Banert:2015aa,
	author = {Banert, Sebastian and Bo{\c t}, Radu Ioan},
	date = {2015/09/01},
	doi = {10.1007/s10957-014-0700-x},
	id = {Banert2015},
	isbn = {1573-2878},
	journal = {Journal of Optimization Theory and Applications},
	number = {3},
	pages = {930--948},
	title = {Backward Penalty Schemes for Monotone Inclusion Problems},
	url = {https://doi.org/10.1007/s10957-014-0700-x},
	volume = {166},
	year = {2015}}

@article{peypouquet2026asymptotic,
	author = {Peypouquet, Juan and Qu, Siqi and Staudigl, Mathias},
	journal = {Applied Mathematics \& Optimization},
	number = {2},
	pages = {36},
	publisher = {Springer},
	title = {Asymptotic Behavior of Penalty Dynamics for Constrained Variational Inequalities},
	volume = {93},
	year = {2026}}

@article{AttCzar18,
	author = {Attouch, Hedy and Cabot, Alexandre and Czarnecki, Marc-Olivier},
	journal = {Transactions of the American Mathematical Society},
	number = {2},
	pages = {755--790},
	title = {Asymptotic behavior of nonautonomous monotone and subgradient evolution equations},
	volume = {370},
	year = {2018}}

@article{Bot:2020aa,
	author = {Bo{\c t}, Radu Ioan and Grad, Sorin-Mihai and Meier, Dennis and Staudigl, Mathias},
	isbn = {2191-950X},
	journal = {Advances in Nonlinear Analysis},
	number = {1},
	pages = {450--476},
	publisher = {de Gruyter},
	title = {Inducing strong convergence of trajectories in dynamical systems associated to monotone inclusions with composite structure},
	volume = {10},
	year = {2020}}

@article{de2011optimal,
	author = {De los Reyes, Juan Carlos},
	journal = {SIAM Journal on Control and Optimization},
	number = {4},
	pages = {1629--1658},
	publisher = {SIAM},
	title = {Optimal control of a class of variational inequalities of the second kind},
	volume = {49},
	year = {2011}}

@article{noun2013forward,
	author = {Noun, Nahla and Peypouquet, Juan},
	journal = {Journal of Optimization Theory and Applications},
	pages = {787--795},
	publisher = {Springer},
	title = {Forward--backward penalty scheme for constrained convex minimization without inf-compactness},
	volume = {158},
	year = {2013}}

@article{boct2018second,
	author = {Bo{\c{t}}, Radu Ioan and Csetnek, Ern{\"o} Robert and L{\'a}szl{\'o}, Szil{\'a}rd Csaba},
	journal = {Analysis and Applications},
	number = {05},
	pages = {601--622},
	publisher = {World Scientific},
	title = {Second-order dynamical systems with penalty terms associated to monotone inclusions},
	volume = {16},
	year = {2018}}

@article{cabot2005,
	author = {Cabot, Alexandre},
	journal = {SIAM Journal on Optimization},
	number = {2},
	pages = {555--572},
	publisher = {SIAM},
	title = {Proximal point algorithm controlled by a slowly vanishing term: applications to hierarchical minimization},
	volume = {15},
	year = {2005}}

@book{nesterov2018lectures,
  title={Lectures on convex optimization},
  author={Nesterov, Yurii and others},
  volume={137},
  year={2018},
  publisher={Springer}
}

@article{de2016strong,
	author = {De los Reyes, Juan Carlos and Meyer, Christian},
	journal = {Journal of Optimization Theory and Applications},
	pages = {375--409},
	publisher = {Springer},
	title = {Strong stationarity conditions for a class of optimization problems governed by variational inequalities of the second kind},
	volume = {168},
	year = {2016}}

@article{sabach2017first,
	author = {Sabach, Shoham and Shtern, Shimrit},
	journal = {SIAM Journal on Optimization},
	number = {2},
	pages = {640--660},
	publisher = {SIAM},
	title = {A first order method for solving convex bilevel optimization problems},
	volume = {27},
	year = {2017}}

@article{Bot:2019aa,
	author = {Bo{\c t},Radu Ioan and Nguyen ,Dang-Khoa},
	date = {2019/10/03},
	doi = {10.1080/02331934.2018.1556662},
	isbn = {0233-1934},
	journal = {Optimization},
	journal1 = {Optimization},
	journal2 = {Optimization},
	month = {10},
	number = {10},
	pages = {1855--1880},
	publisher = {Taylor \& Francis},
	title = {A forward--backward penalty scheme with inertial effects for monotone inclusions. Applications to convex bilevel programming},
	type = {doi: 10.1080/02331934.2018.1556662},
	url = {https://doi.org/10.1080/02331934.2018.1556662},
	volume = {68},
	year = {2019},
	year1 = {2019}}

@article{Bot:2016aa,
	author = {Bo{\c t}, Radu Ioan and Csetnek, Ern{\"o}Robert},
	date = {2016/03/15/},
	doi = {https://doi.org/10.1016/j.jmaa.2015.11.032},
	isbn = {0022-247X},
	journal = {Journal of Mathematical Analysis and Applications},
	number = {2},
	pages = {1688--1700},
	title = {Approaching the solving of constrained variational inequalities via penalty term-based dynamical systems},
	url = {https://www.sciencedirect.com/science/article/pii/S0022247X15010707},
	volume = {435},
	year = {2016}}

@article{Solodov:2007aa,
	author = {Solodov, Mikhail},
	isbn = {0944-6532},
	journal = {Journal of Convex Analysis},
	number = {2},
	pages = {227},
	publisher = {HELDERMANN VERLAG LANGER GRABEN 17, 32657 LEMGO, GERMANY},
	title = {An explicit descent method for bilevel convex optimization},
	volume = {14},
	year = {2007}}

@article{Attouch:2010aa,
	author = {Attouch, Hedy and Czarnecki, Marc-Olivier},
	isbn = {0022-0396},
	journal = {Journal of Differential Equations},
	number = {6},
	pages = {1315--1344},
	publisher = {Elsevier},
	title = {Asymptotic behavior of coupled dynamical systems with multiscale aspects},
	volume = {248},
	year = {2010}}

@article{attouch1996dynamical,
	author = {Attouch, Hedy and Cominetti, Roberto},
	journal = {Journal of Differential Equations},
	number = {2},
	pages = {519--540},
	publisher = {Elsevier},
	title = {A dynamical approach to convex minimization coupling approximation with the steepest descent method},
	volume = {128},
	year = {1996}}

@article{Peypouquet:2012aa,
	author = {Peypouquet, Juan},
	date = {2012/04/01},
	doi = {10.1007/s10957-011-9936-x},
	id = {Peypouquet2012},
	isbn = {1573-2878},
	journal = {Journal of Optimization Theory and Applications},
	number = {1},
	pages = {123--138},
	title = {Coupling the Gradient Method with a General Exterior Penalization Scheme for Convex Minimization},
	url = {https://doi.org/10.1007/s10957-011-9936-x},
	volume = {153},
	year = {2012}}

@book{BauCom16,
	address = {CMS Books in Mathematics},
	author = {Heinz H. Bauschke and Patrick L. Combettes},
	publisher = {Springer},
	title = {Convex Analysis and Monotone Operator Theory in Hilbert Spaces},
	year = {2016}}

@article{Bot:2014aa,
	author = {Bo{\c t}, Radu Ioan and Csetnek, Ern{\"o}Robert},
	date = {2014/06/01},
	doi = {10.1007/s11228-014-0274-7},
	id = {Bo{\c t}2014},
	isbn = {1877-0541},
	journal = {Set-Valued and Variational Analysis},
	number = {2},
	pages = {313--331},
	title = {Forward-Backward and Tseng's Type Penalty Schemes for Monotone Inclusion Problems},
	url = {https://doi.org/10.1007/s11228-014-0274-7},
	volume = {22},
	year = {2014}}

@article{AttCzarPey11,
	author = {Attouch, H\'{e}dy and Czarnecki, Marc-Olivier and Peypouquet, Juan},
	doi = {10.1137/110820300},
	eprint = {https://doi.org/10.1137/110820300},
	journal = {SIAM Journal on Optimization},
	number = {4},
	pages = {1251-1274},
	title = {Coupling Forward-Backward with Penalty Schemes and Parallel Splitting for Constrained Variational Inequalities},
	url = {https://doi.org/10.1137/110820300},
	volume = {21},
	year = {2011}}

@article{cortild2025regularization,
  author = {Daniel Cortild and Meggie Marschner and Mathias Staudigl},
  title = {Regularization Methods for Solving Hierarchical Variational Inequalities with Complexity Guarantees},
  journal = {arXiv preprint arXiv:2512.20772},
  year = {2025}
}

\end{document}